\documentclass[12pt]{article}

\usepackage{amsthm}

\usepackage[left=1cm,right=1cm,top=0.5cm,bottom=1.5cm,bindingoffset=0cm]{geometry}

\usepackage[T2A]{fontenc}
\usepackage[utf8]{inputenc}
\usepackage[english]{babel}
\usepackage{amsmath}
\usepackage{amsfonts}
\usepackage{mathtools}
\usepackage{graphicx}
\usepackage{systeme}
\usepackage{amssymb}
\usepackage{hyperref}
\usepackage{csquotes}
\usepackage{bbm}
\usepackage[
backend=biber,
style=numeric,
sorting=none
]{biblatex}
\usepackage[normalem]{ulem}
\graphicspath{ {./} }

\newtheorem{theorem}{Theorem}

\newtheorem{lemma}{Lemma}[section]
\newtheorem*{lemma*}{Lemma}
\newtheorem*{den*}{Denotion}

\newtheorem*{remark*}{Remark}

\theoremstyle{remark}
\newtheorem*{remark}{Remark}

\theoremstyle{definition}
\newtheorem{definition}{Definition}[section]

\newcommand{\mP}{\mathbf{P}}
\newcommand{\mE}{\mathbf{E}}

\newcommand{\rk}{\mathrm{rk}}
\newcommand{\Int}{\mathrm{Int}}
\newcommand{\centr}{\mathrm{center}}
\newcommand{\shadow}[2]{\partial^{(#1)}(#2)}
\newcommand{\cG}{\mathcal{G}}
\newcommand{\cGp}{\mathcal{G}^{\prime}}
\newcommand{\cF}{\mathcal{F}}
\newcommand{\cA}{\mathcal{A}}
\newcommand{\cAp}{\mathcal{A}^{\prime}}
\newcommand{\cB}{\mathcal{B}}
\newcommand{\cBp}{\mathcal{B}^{\prime}}
\newcommand{\cBpp}{\mathcal{B}^{\prime\prime}}

\newcommand{\skob}[1]{\left(#1\right)}
\newcommand{\fig}[1]{\left\{#1\right\}}

\title{Probability threshold for the stability of $\alpha(G(n,r,s))$}
\date{}
\author{M. M. Koshelev, A. M. Raigorodskii}

\begin{document}

\maketitle

\sloppy

\begin{abstract}
    Let $G(n,r,s)$ be a graph $(V,E)$ such that $V = \binom{[n]}{r}$ and $E = \{(u, v)| |u \cap v| = s\}. $In this paper we study the probability threshold for the exact stability of the independence number of $G(n, r, s)$ where $r > 2s+2$. We improve the previous result of Pyaderkin that implied asymptotic stability of the independence number.\\ 
    {\bf Keywords:} independence number, stability theorem, generalized Johnson graphs, random subgraphs.
\end{abstract}

\section{Introduction}

In this paper we study one particular family of distance graphs called $G(n,r,s)$ graphs or generalized Johnson graphs. The vertices of such graphs are all possible $r$-element subsets of a set $[n] = \{1, \dotsc, n\}$. An edge is drawn between two vertices if and only if the corresponding sets intersect in exactly $s$ elements.

Generalized Johnson graphs have many applications in combinatorial geometry, coding theory and other branches of combinatorics (see, for example, \cite{Codes,Codes2,FrW,Spectrum}). A lot of literature studies various properties of these graphs (see, for example, \cite{FrFu,RaiMeta,MSp,Shu1,Shu2}).

One of the most studied characteristics of $G(n, r, s)$ is its independence number, $\alpha(G(n,r,s))$. Recall that the independence number of a graph is the biggest $k$ such that there exists $k$ vertices that are pairwise disconnected. Here we will not dive into the details of all results concerning $\alpha(G(n,r,s))$ but rather suggest the survey \cite{RaiMetaAlphaChrom}. Here we only mention two of these results. The first one is the famous Erd\H os--Ko--Rado theorem.

\begin{theorem}[\cite{EKR}]
    \label{EKRth}
    Let $r$ and $n$ be integers such that $n \geq 2r$. Then $\alpha(G(n, r, 0)) = \binom{n-1}{r-1}$.
\end{theorem}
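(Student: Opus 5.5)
The plan is Katona's cyclic-permutation argument. Two vertices of $G(n,r,0)$ are adjacent exactly when the corresponding $r$-sets are disjoint. So an independent set is an intersecting family $\cF\subseteq\binom{[n]}{r}$, and we must show $|\cF|\le\binom{n-1}{r-1}$. The lower bound is immediate: the star $\{A: 1\in A\}$ is intersecting and has exactly $\binom{n-1}{r-1}$ members. Hence $\alpha(G(n,r,0))\ge\binom{n-1}{r-1}$, and only the upper bound needs work.

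For the upper bound, fix a cyclic ordering $\sigma$ of $[n]$, i.e.\ place the elements around a circle. Call an $r$-set an \emph{arc} of $\sigma$ if its elements are consecutive on the circle; each $\sigma$ has exactly $n$ arcs. The key lemma is that, when $n\ge 2r$, an intersecting family contains at most $r$ arcs of any fixed $\sigma$.

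\begin{itemize}
\item Suppose the arc $A=\{a_1,\dots,a_r\}$ (in circular order) lies in $\cF$. Every other arc in $\cF$ meets $A$, so it either starts at some $a_{i+1}$ or ends at some $a_i$, for $1\le i\le r-1$.
\item The arc ending at $a_i$ and the arc starting at $a_{i+1}$ are disjoint, because $n\ge 2r$. So at most one of each such pair lies in $\cF$.
\item There are $r-1$ pairs, giving at most $1+(r-1)=r$ arcs in total.
\end{itemize}

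This lemma is the only place where $n\ge 2r$ is used, and it is the step where care is needed. One must check that the two arcs in a pair really are disjoint, and that the arcs meeting $A$ are exactly those listed, without double counting near the ends of $A$.

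Next, double count pairs $(\sigma, F)$ with $F\in\cF$ an arc of $\sigma$. Counting over $\sigma$ and using the lemma, the number of pairs is at most $r(n-1)!$, since there are $(n-1)!$ cyclic orderings. Counting over $F$, a fixed $r$-set is an arc in exactly $r!\,(n-r)!$ cyclic orderings: glue $F$ into a block, arrange the $n-r+1$ objects cyclically in $(n-r)!$ ways, and order the block internally in $r!$ ways. Therefore
\[
|\cF|\, r!\,(n-r)! \le r\,(n-1)!,
\]
which rearranges to $|\cF|\le\binom{n-1}{r-1}$. Combined with the star construction, this gives the claim.
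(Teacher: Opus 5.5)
Your proof is correct and complete. It is Katona's cyclic-permutation argument, and the points you flag all check out:
\begin{itemize}
\item Every arc of $\sigma$ other than $A$ that meets $A$ lies in one of the $r-1$ pairs. Any overlap between the two kinds of arcs would not hurt an upper bound anyway.
\item The two arcs of the $i$-th pair together occupy $2r\le n$ consecutive positions, so they are disjoint.
\item Since $r<n$, a fixed $r$-set is an arc in exactly $r!\,(n-r)!$ of the $(n-1)!$ cyclic orders.
\end{itemize}

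The paper gives no proof of this theorem. It cites \cite{EKR} and mentions Katona's ``proof from The Book'' \cite{Katona1972}, which is exactly what you reproduce. The original Erd\H{o}s--Ko--Rado argument went a different way, via shifting (compression) combined with induction. That route is longer, but it extends more readily, for example to $t$-intersecting families for large $n$. The circle method reaches the sharp bound in a few lines, and the hypothesis $n\ge 2r$ enters only through the disjointness of paired arcs.
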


Here we want to highlight a huge role of G.O.H. Katona in the studies of Erd\H os--Ko--Rado theorem. Among them are papers \cite{Katona1964Intersection, Katona2017CompleteIntersection, Katona2017GeneralTwoPartEKR, KatonaWang2026NearlyEKR} and of course a ``proof from The Book'' of EKR theorem obtained in \cite{Katona1972}. And even more than that, G.O.H. Katona was a scientific supervisor of P. Frankl, who also played a big role in the studies of EKR theorem and its modifications (see, for example, \cite{Frankl1978IntersectingFamilies, Frankl1987MaximalDegree, FranklFuredi1991BeyondEKR, FranklTokushige2016Invitation, FranklKupavskii2017SizeSensitive, FranklKupavskii2018EKRVectors, FranklKupavskii2021AlmostIntersecting, FranklKupavskii2021Diversity, FranklKupavskii2023LargeCoveringNumber, FranklKupavskii2024IntersectionVectors}).

The second result concerning the independence number of Johnson graphs we want to mention here is due to Frankl and F\"uredi.

\begin{theorem}[\cite{FrFu}]\label{alpha}
    For any fixed $r$ and $s$ such that $r > 2s + 1$ there exists $n_0(r,s)$ such that for every $n > n_0(r, s)$ we have 
    $$
        \alpha(G(n, r, s)) = \binom{n-s-1}{r-s-1}.
    $$
\end{theorem}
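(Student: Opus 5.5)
The lower bound is immediate from a construction: fix an $(s+1)$-set $S\subset[n]$ and take all $r$-sets containing $S$. Any two such sets share at least $s+1$ elements, so no edge joins them. This family is independent and has size $\binom{n-s-1}{r-s-1}$. The work is the matching upper bound: for $n$ large, every family $\cF\subset\binom{[n]}{r}$ with no two members intersecting in exactly $s$ elements has $|\cF|\le\binom{n-s-1}{r-s-1}$.

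\textbf{Step 1: the crude bound.} I would first show $|\cF|=O(n^{r-s-1})$ via the Frankl--F\"uredi/Deza--Erd\H os--Frankl delta-system method. By the Erd\H os--Rado sunflower lemma, take a large $k=k(r)$. For every $F\in\cF$ and every subset $T\subseteq F$, either $T$ is the kernel of a sunflower of size $k$ in $\cF$ with petals in $F$, or it is not. Füredi's structural lemma then passes to a subfamily $\cF^*\subseteq\cF$ with $|\cF^*|\ge c(r,k)|\cF|$ and the following properties:
\begin{itemize}
\item all members of $\cF^*$ have the same ``intersection pattern'' $\mathcal{J}\subseteq 2^{[r]}$;
\item $\mathcal{J}$ is closed under intersection;
\item every $J\in\mathcal{J}$ is the kernel of a large sunflower in $\cF$.
\end{itemize}
If some $J\in\mathcal{J}$ had $|J|=s$, the sunflower petals would give two members of $\cF$ meeting in exactly $s$ elements, which is a contradiction. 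So no pattern set has size $s$. The rank argument for intersection-closed families avoiding size $s$ then shows $\cF^*$ has rank at most $r-s-1$, giving $|\cF|\le C(r,s)n^{r-s-1}$.

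\textbf{Step 2: finding a popular $(s+1)$-set.} Next I would show that, for $n$ large, a near-extremal $\cF$ (say $|\cF|\ge\binom{n-s-1}{r-s-1}$) has an $(s+1)$-set $S$ lying in most members of $\cF$. The condition $r>2s+1$ enters here. For two ``generic'' sets sharing a common $(s+1)$-kernel, a large sunflower structure on $(s+1)$-kernels forces the pattern of $\cF^*$ to contain a chain of $(s+1)$-sets that are all kernels. Two distinct kernels $S_1,S_2$ of size $s+1$ meeting in fewer than $s+1$ elements allow, using the room $r-(s+1)>s$, the construction of $F_1\supset S_1$ and $F_2\supset S_2$ from the sunflowers with $|F_1\cap F_2|=s$. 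More precisely, choose petals so that the intersection is exactly a prescribed $s$-set, which requires $r\ge 2s+2$. Hence all heavy $(s+1)$-kernels must coincide (or be pairwise intersecting in a controlled way), yielding one dominant $S$.

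\textbf{Step 3: stability.} Finally I would split $\cF=\cF_S\cup\cF'$, where $\cF_S$ consists of members containing $S$ and $\cF'$ of the rest. Each $G\in\cF'$ satisfies $|G\cap S|\le s$. Choosing $T\subseteq G$ appropriately, I would count the $r$-sets $F\supseteq S$ with $|F\cap G|=s$: this is $\Omega(n^{r-s-1})$ of the $\binom{n-s-1}{r-s-1}$ sets through $S$, and all of them are excluded from $\cF_S$. A double-counting/injection argument then gives
\[
|\cF'|\cdot\Omega\!\left(n^{r-s-1}\right)\le \text{(number of missing sets through } S)\cdot O\!\left(n^{r-s-1}\right)\cdot\text{(small factor)},
\]
and for $n>n_0(r,s)$ this forces $\cF'=\emptyset$ when $|\cF|$ is maximal.

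\textbf{Main obstacle.} I expect the main difficulty to be Step 1's rank argument: showing that an intersection-closed pattern on $[r]$ with every member a sunflower kernel, and with no member of size $s$, has rank at most $r-s-1$. The Steps 2--3 bookkeeping is routine in comparison.
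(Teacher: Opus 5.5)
The paper does not prove this statement; it quotes it from Frankl--F\"uredi. Still, the hard part of the proof of Theorem~\ref{main} is a quantitative version of it for $r>2s+2$, with broadly your architecture: structure and rank, then one dominant star, then clean-up. The difficulty is that the two stages you call routine carry the content, and as written both fail. In Step~2, two heavy kernels $S_1,S_2$ with $|S_1\cap S_2|\le s-1$ do not yield $F_1\supset S_1$, $F_2\supset S_2$ with $|F_1\cap F_2|=s$. You can choose petals that avoid each other, but nothing forces them to share prescribed elements.

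Here is a counterexample. Take disjoint $Y_1,Y_2\subseteq[n]\setminus(S_1\cup S_2)$, and let the family consist of all $S_1\cup A$ with $A\in\binom{Y_1}{r-s-1}$ and all $S_2\cup A$ with $A\in\binom{Y_2}{r-s-1}$. It is independent, since every cross intersection equals $S_1\cap S_2$, and both kernels are heavy. So several centers can coexist, and the dominance of one is a quantitative fact.

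The paper obtains it in three moves:
\begin{enumerate}
\item Each large $r$-partite piece gets a center $C_j$ (Lemmas~\ref{rank} and~\ref{centre}).
\item The $s$-shadows of the links $\cG_j(C_j)$ of distinct centers, taken outside the union $W$ of the centers, are essentially disjoint. Indeed, a common $s$-set $X$, together with diversity, gives $u\supseteq C_i\cup X'$ and $v\supseteq C_j\cup X'$ with $u\cap v=X'\cup(C_i\cap C_j)$, where $X'\subseteq X$ has size $s-|C_i\cap C_j|$. Hence $\sum_j|\partial^{(s)}(\cG_j(C_j))|\le(1+o(1))\binom{n}{s}$.
\item Kruskal--Katona, together with the monotonicity of $R(x)=\binom{x}{r-s-1}/\binom{x}{s}$, forces one link of size $(1-o(1))\binom{n}{r-s-1}$. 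This, not petal-choosing, is where $r-s-1>s$ is really used.
\end{enumerate}

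In Step~3 your count is wrong. With $q=|G\cap S|$, the number of $r$-sets $F\supseteq S$ with $|F\cap G|=s$ is $\binom{r-q}{s-q}\binom{n-r-s-1+q}{r-2s-1+q}=\Theta(n^{r-2s-1+q})$. This is $\Omega(n^{r-s-1})$ only when $q=s$; for $G\cap S=\varnothing$ it is exactly the quantity $\binom{r}{s}\binom{n-r-s-1}{r-2s-1}$ from the easy part. With the correct count, a per-$G$ double count proves nothing, because the sets excluded by different $G$ can overlap almost entirely. What you need is a lower bound on the number of missing star members in terms of $|\cF'|$.

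The paper gets this bound as follows. It passes to the members of $\cF'$ with $G\cap S=Q_0$ for the most popular $Q_0$. Each $(s-|Q_0|)$-set in the shadow of their link forces $\Theta(n^{r-2s-1+|Q_0|})$ missing sets (inequality~(\ref{lst})). It then combines Theorem~\ref{shadow_for_ind} with Kruskal--Katona. If that link has $(r-s-1)$-shadow of size $\binom{x}{r-s-1}$, then $x^{r-s-1}\gtrsim|\cF'|\gtrsim x^{s-|Q_0|}n^{r-2s-1+|Q_0|}$. This is impossible because $x=o(n)$, and that last fact needs $|\cF'|=o(n^{r-s-1})$ from Step~2.

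Your Step~1, by contrast, is the standard delta-system bound and not the real obstacle.
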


Note that the inequality $\alpha(G(n, r, s)) \geq \binom{n-s-1}{r-s-1}$ is trivial since the set $V_S = \{v \in G(n,r,s)\mid S \subset v\}$, where $S$ is an arbitrary set with $|S| = s + 1$, is independent for any $n$, $r$ and $s$. We can thus see that both Theorem \ref{EKRth} and Theorem \ref{alpha} assert that this set is indeed a maximal independent set in $G(n, r, s)$.

One of the natural questions about $\alpha(G(n,r,s))$ is the stability of this number. To formalize this we need to introduce the following definition.

\begin{definition}
    Let $n$, $r$ and $s$ be integers and fix $p \in [0, 1]$. Then 
    $
        G_p(n, r, s)
    $
    is a random graph obtained from $G(n, r, s)$ by deleting each of its edges independently of the others with probability $1 - p$.
\end{definition}

In view of this definition the question about the stability of $\alpha(G(n,r,s))$ can be reformulated as follows. For which sequences $r(n)$, $s(n)$ and $p(n)$ the equality $\alpha(G_p(n,r,s)) = \alpha(G(n,r,s))$ holds with high probability (that is with probability tending to 1)?

Despite a lot of attention in the past 10 years this question remains widely open. The only case that is almost completely settled is the case $s = 0$. In the series of papers \cite{BogolyubskiiGusevPyaderkinRaigorodskii2014, BogolyubskiiGusevPyaderkinRaigorodskii2015, BNR, BBN, Das}, which culminated in the work \cite{BKL} the following theorem was proved.

\begin{theorem}[\cite{BNR, BBN, Das, BKL}]
    Let $\varepsilon > 0$ be an arbitrary number, and let $n \geq 2r+1$. Then for every $p \geq (1 + \varepsilon)p_0$, where 
    $$
    p_0 = \begin{cases}
        3/4, & n = 2r + 1, \\
        \log\left(n\binom{n-1}{r}\right) / \binom{n-r-1}{r-1},& n > 2r+1,
    \end{cases}
    $$
    the graph $G_p(n,r,0)$ with high probability (w.h.p.) has independence number equal to $\binom{n-1}{r-1}$.
    
    Moreover the equality $\alpha(G_p(n,r,0)) = \binom{n-1}{r-1}$ w.h.p. does not hold for any $p \leq (1 - \varepsilon)p_0$.  
\end{theorem}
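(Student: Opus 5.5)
The plan is to prove the two directions separately. The $1$-statement (stability for $p \ge (1+\varepsilon)p_0$) comes from a union bound over all ``non-star'' candidate independent sets. The $0$-statement comes from a second moment argument for one explicit family of competitors. Write $N=\binom{n-1}{r-1}$ for the size of a star $V_{\{i\}}$ and $D=\binom{n-r-1}{r-1}$ for the number of neighbours that a set $A\not\ni i$ has inside the star $V_{\{i\}}$.

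For the $0$-statement with $n>2r+1$, fix a star $V_{\{i\}}$, a set $A\not\ni i$, and one set $B\in V_{\{i\}}$ with $A\cap B=\emptyset$. Let $X$ count the pairs $(A,B)$ such that $B$ is the \emph{only} neighbour of $A$ inside $V_{\{i\}}$ that survives in $G_p$. For each such pair, $(V_{\{i\}}\setminus\{B\})\cup\{A\}$ is independent and has size $N$, so $\alpha(G_p)=N$ is still attained by a non-star family. More to the point, if $A$ has no surviving neighbours at all, then $V_{\{i\}}\cup\{A\}$ is independent and $\alpha(G_p)>N$. The expected number of such $A$ is
\[
n\binom{n-1}{r}(1-p)^{D}\approx n\binom{n-1}{r}e^{-pD},
\]
which tends to infinity exactly when $p\le(1-\varepsilon)\log\bigl(n\binom{n-1}{r}\bigr)/D$. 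I would then check that the variance is $o(\mE^2)$. Two sets $A,A'$ share at most $\binom{n-r-2}{r-1}\ll D$ star-neighbours unless $A=A'$, so the second moment computation is routine. For $n=2r+1$ we have $D=r$, and the threshold $3/4$ needs a different local obstruction. Here I would look for a bounded configuration, namely a few sets swapped in and out of a star, whose expected number is $\Theta(\text{poly}(n))\cdot q(p)$ for a polynomial $q$ that changes behaviour at $p=3/4$. Identifying this configuration is the part of the lower bound I expect to be least routine.

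For the $1$-statement, let $I$ be independent in $G_p$ with $|I|\ge N$. Choose $i$ maximising $|I\cap V_{\{i\}}|$, and set $\cA=I\setminus V_{\{i\}}$ and $\cB=V_{\{i\}}\setminus I$. Then $|\cB|\le|\cA|=:k$, and every Kneser edge between $\cA$ and $V_{\{i\}}\setminus\cB$ must have been deleted. I would therefore bound
\[
\mP(\text{bad})\le\sum_{k\ge1}\sum_{i,\cA,\cB}(1-p)^{e(\cA,\,V_{\{i\}}\setminus\cB)},
\]
and split according to the size of $k$.

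Small $k$ (say $k\le n^{C}$): the sets in $\cA$ have $D$ star-neighbours each, and two distinct sets share few of them. Hence $e(\cA,V_{\{i\}}\setminus\cB)\ge kD-k-O(k^2\binom{n-r-2}{r-2})$. Counting at most $(n\binom{n-1}{r})^k N^k$ choices, each term is at most $\bigl(n\binom{n-1}{r}N e^{-(1+\varepsilon/2)pD\cdot\ldots}\bigr)^k$. The extra factor $N^k$ is the delicate point. To absorb it, I would not choose $\cB$ freely but note that $\cB$ must contain all surviving neighbours of $\cA$, and count via the neighbours instead. This reduces matters to the single-$A$ estimate of the previous paragraph, which has room $\varepsilon$.

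Large $k$: this is the main obstacle. Here I would invoke a quantitative stability result for Theorem~\ref{EKRth} of Hilton--Milner/Frankl type. It says that an intersecting-like family of size close to $N$ whose maximum degree is $N-k$ forces many edges from $\cA$ into the star, roughly $e(\cA,V_{\{i\}}\setminus\cB)\ge c\,k D$, where the constant $c$ improves as $k$ grows. I would combine this with a graph container lemma for the Kneser graph, so that the number of relevant $\cA$ is $e^{o(kpD)}$ rather than $\binom{\cdot}{k}$; the union bound then converges for all $k$ at once. The hardest part will be making the container count and the supersaturation bound match precisely enough to retain the constant $1+\varepsilon$ in the intermediate range of $k$.
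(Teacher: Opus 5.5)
\textbf{The $1$-statement has a genuine gap.} The paper does not prove this statement; it is quoted from \cite{BNR, BBN, Das, BKL}. The only related argument in the paper is the easy part of Theorem~\ref{main}, which is your $0$-statement carried over to $s\ge1$.

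Your reduction of small $k$ to the single-$A$ estimate only reaches $k\le\delta(\varepsilon)\,pD$. The bound $\sum_{(i,A)}\mP\bigl(\mathrm{Bin}(D,p)\le k\bigr)\approx n\binom{n-1}{r}\frac{(pD)^k}{k!}(1-p)^{D}$ is $o(1)$ only in that range. There it shows that every $A$ has more than $k$ surviving star-neighbours, all of which would have to lie in $\cB$.

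Beyond that range you must show that the surviving star-neighbourhoods of all $A\in\cA$ are unlikely to fit inside one set of size at most $k$. Here $N^{\mathrm{surv}}_i(A)$ denotes the star-neighbours of $A$ whose edge to $A$ survives. You give no argument for this.

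Your displayed union bound cannot supply it. Since $e(\cA,V_{\{i\}}\setminus\cB)\le kD$, every summand is at least $(1-p)^{kD}$. Enumerating $\cB$ costs $\binom{N}{k}$ on top of $\binom{\binom{n-1}{r}}{k}$. For fixed $r\ge3$, small $\varepsilon$ and $p=(1+\varepsilon)p_0$, the contribution of each $k$ up to about $n^{(r-2-(r+1)\varepsilon)/2}$ then exceeds $1$, whatever supersaturation constant $c\le1$ you prove. Your claim that $c$ improves with $k$ is also unsupported: the trivial bound $kD-k|\cB|$ gets worse as $k$ grows.

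The container count $e^{o(kpD)}$ for the relevant $\cA$ is only asserted, and you concede you cannot make it mesh with supersaturation. What is missing is a quantitative expansion statement for $\cA$ into the star. It must be strong enough to make $\bigl|\bigcup_{A\in\cA}N^{\mathrm{surv}}_i(A)\bigr|\le k$ improbable without enumerating $\cB$, and it must hold uniformly over the whole range $n\ge 2r+1$. That is the substance of the cited works. At $n=2r+1$, where $p$ is constant and $r\to\infty$, \cite{BKL} needed a separate hitting-time argument. As written, your $1$-statement is proved only for $k=O_\varepsilon(pD)$.

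\textbf{The $0$-statement is essentially right, with two corrections.}

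First, the second moment depends on the overlap of \emph{edge} sets, not of star-neighbourhoods. For $A\neq A'$ in the same star the two events use disjoint edges, and if the stars differ they share at most one edge. The only real correlation is between $(i,A)$ and $(i',A)$, which share $\binom{n-r-2}{r-2}$ edges. This is exactly the bookkeeping in the paper's easy part. Your claim $\binom{n-r-2}{r-1}\ll D$ is false for $n\gg r$, since the ratio is $\frac{n-2r}{n-r-1}$.

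Second, $n=2r+1$ needs no new obstruction. Your own expression $n\binom{n-1}{r}(1-p)^{D}$ with $D=r$ equals $\Theta(\sqrt r)\,\bigl(4(1-p)\bigr)^r$. This tends to infinity for $p\le(1-\varepsilon)\frac34$ and to $0$ for $p\ge(1+\varepsilon)\frac34$. The slip was replacing $(1-p)^D$ by $e^{-pD}$, which is invalid for constant $p$. In any case, a bounded configuration with $\mathrm{poly}(n)\cdot q(p)$ expected copies could not produce a threshold at a constant $p$.
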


The other well-studied case is the case of fixed $r$ and $s$ with $r \leq 2s+1$. Here it can be proved that the independence number is not stable even for $p = 1/2$. For more details we refer to the paper \cite{BogolyubskiiGusevPyaderkinRaigorodskii2015} and the survey \cite{RaiMetaAlphaChrom}.

The case of $r > 2s + 1$ turned out to be trickier. We are aware of two major results here. The first one obtained by Pyaderkin in \cite{Pyaderkin2019MathNotes} finds a coarse probability threshold for a less restrictive type of stability.

\begin{theorem}[\cite{Pyaderkin2019MathNotes}]\label{weakSt}
    Fix $r$ and $s$ such that $r > 2s + 1$. 
    Let
    $$
    p_0(n,r,s)=\frac{\ln n}{n^{r-2s-1}}.
    $$
 Then, as $n\to\infty$,
$$
\mathbf{P}\Bigl[\alpha\bigl(G_p(n,r,s)\bigr)
=\Theta\bigl(\alpha(G(n,r,s))\bigr)\Bigr]
\longrightarrow
\begin{cases}
1, & \text{if } p/p_0\ge 1,\\[1ex]
0, & \text{if } p/p_0\to 0.
\end{cases}
$$
\end{theorem}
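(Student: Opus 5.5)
We need to prove both directions of Theorem \ref{weakSt}. Here $\Theta(\alpha(G(n,r,s)))$ means $\alpha(G_p)\le C\binom{n-s-1}{r-s-1}\asymp C n^{r-s-1}$ for some constant $C=C(r,s)$. The lower bound $\alpha(G_p)\ge\alpha(G)$ is trivial, since deleting edges can only enlarge independent sets. So the content is: an upper bound of order $n^{r-s-1}$ w.h.p. when $p\ge p_0$, and $\alpha(G_p)/n^{r-s-1}\to\infty$ w.h.p. when $p/p_0\to 0$.

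\textbf{The 0-statement.} I would give an explicit construction. Fix an $(s+1)$-set $S$ and let $m$ grow, to be chosen later. Start from the star $V_S$ of size $\asymp n^{r-s-1}$ and add a family $W$ of $r$-sets with $|w\cap S|\le s$. Natural candidates for $W$ are the sets containing a fixed $s$-set $T\subset S$ but not all of $S$; there are $\asymp n^{r-s}$ of them.

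Edges between $W$ and $V_S$ occur for pairs intersecting in exactly $s$ elements. For fixed $w\in W$, the number of $v\in V_S$ with $|v\cap w|=s$ is $\asymp n^{r-s-1}$, since $v$ must avoid the other $r-s$ elements of $w$.

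This is too many edges, so I would instead estimate the maximum independent set by a deletion argument. Take a random $W'\subset W$ of size $N$. Expose the edges inside $V_S\cup W'$: the expected number of surviving edges incident to $W'$ is $\lesssim pN n^{r-s-1}+pN^2 n^{r-2s}\cdot(\dots)$. Delete one endpoint per surviving edge.

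The degree of a vertex in $G(n,r,s)$ is $\asymp n^{2r-2s}/n^{r}\cdot n^{s}$; more precisely $\binom{r}{s}\binom{n-r}{r-s}\asymp n^{r-s}$. With $p\ll \ln n/n^{r-2s-1}$, the expected degree is $p n^{r-s}\ll n^{s+1}\ln n$.

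Turán gives $\alpha\ge |V|/(1+\bar d)\asymp n^{r}/(p n^{r-s})=n^{s}/p\gg n^{r-s-1}/\ln n$. This is off by exactly a $\ln n$ factor, which is the hallmark of the threshold. So the 0-statement needs the random-greedy / Ajtai–Komlós–Szemerédi-type improvement giving $\alpha\gtrsim (|V|/d)\ln d$ for locally sparse graphs. I would apply Shearer's bound for graphs with few triangles (or a localized version on a suitable subfamily), checking that the triangle count in $G_p$ is small relative to $n^{r}d^2$. With $d=pn^{r-s}$ and $\ln d\asymp\ln n$ in the relevant range, this gives $\alpha\gtrsim (n^{s}/p)\ln n\gg n^{r-s-1}$ whenever $p/p_0\to0$. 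The condition $r>2s+1$ ensures $d\to\infty$ polynomially, so $\ln d=\Theta(\ln n)$ (when $p$ is much smaller, monotonicity handles it).

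\textbf{The 1-statement.} This is a first-moment/container count. Use hypergraph containers, or a Kleitman–Winston-type algorithm, on $G(n,r,s)$. Every set $I$ of size $t=Cn^{r-s-1}$ is specified by a small fingerprint $F\subset I$ of size $\lesssim n^{r}/(\text{codegree scale})\cdot\ln$, together with a container $C(F)\supset I$ that is nearly independent in $G$. A Frankl–Füredi supersaturation statement is needed here: any family of size $\ge C'n^{r-s-1}$ spans $\gtrsim$ (size)$\cdot n^{r-2s-1}\cdot n^{s}$ edges, i.e. an average degree of order $n^{r-s-1}\cdot n^{?}$. Then $\mP[I\text{ independent}]\le(1-p)^{e(I)}$, and the union bound $\sum\binom{\cdot}{\cdot}e^{-p e(I)}$ becomes small exactly when $p\,n^{r-2s-1}\ge \ln n$. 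The $\ln n$ comes from the $\binom{n^r}{t}\approx e^{t\ln n}$ entropy of choosing the fingerprint.

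\textbf{Main obstacle.} The key difficulty is this supersaturation lemma for $G(n,r,s)$ at sizes just a constant factor above the Frankl–Füredi bound. It asks for a stability version of Theorem \ref{alpha}: a family of size $Cn^{r-s-1}$ must contain $\gtrsim t\,n^{r-2s-1}$ edges up to constants. I would try to prove it via the delta-system method: large families without many $s$-intersections contain kernels of size $s+1$ covering most sets, and each set outside the kernels creates many edges. Calibrating the constants so that the threshold is exactly $p\ge p_0$, rather than $p\ge Kp_0$, is the delicate part. I would take $C$ large depending on the implicit constants.
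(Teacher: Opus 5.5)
\textbf{Verdict: genuine gap in the 1-statement.} Your plan for the 0-statement is workable, but the 1-statement rests entirely on a supersaturation lemma that you neither prove nor state in a usable form.

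Your first formulation (roughly $t\,n^{r-2s-1}\cdot n^{s}$ edges) is false for $s\ge1$. Take $C$ stars $V_{S_1},\dots,V_{S_C}$ with pairwise disjoint centers. This family has $t\asymp Cn^{r-s-1}$ vertices and each star is independent. A typical member of $V_{S_i}$ has only $\Theta(n^{r-2s-1})$ neighbors in $V_{S_j}$, so there are only $\Theta(C^2n^{2r-3s-2})=\Theta(t^2/n^{s})$ edges.

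Your second formulation, $e(I)\ge c\,t\,n^{r-2s-1}$ with $c$ independent of $C$, is too weak at $p=p_0$. The number of $t$-sets is $\exp\bigl((s+1+o(1))\,t\ln n\bigr)$, while $(1-p_0)^{e(I)}\le\exp(-c\,t\ln n)$, so your union bound needs $c>s+1$. ``Taking $C$ large'' helps only if the number of edges per vertex grows with $C$.

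What you need is the quadratic bound, which the example shows is best possible:
$$e(I)\ge c'\,\frac{|I|^2}{n^{s}}\qquad\text{whenever } |I|\ge C_0\,n^{r-s-1}.$$
With it, containers and fingerprints are unnecessary. For $t=Cn^{r-s-1}$ and $p\ge p_0$, the plain union bound over all $t$-subsets of $V$ is at most $\exp\bigl(t\ln n\,(s+1-c'C+o(1))\bigr)\to0$ once $C>(s+1)/c'$. The $\ln n$ here is the entropy of choosing $I$ itself, not a fingerprint.

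\textbf{How to close the gap.} You do not need to redo the $\Delta$-system method. The paper only quotes Theorem~\ref{weakSt} from Pyaderkin without proof, but the bound follows in a few lines from results it collects:
\begin{itemize}
\item Apply Theorem~\ref{struct} with $a=2r$ to $I$, obtaining $\mathcal{A}'\subseteq I$ with $|\mathcal{A}'|\ge c(r,2r)|I|$.
\item If $C_0>1/\bigl(c(r,2r)(r-s-1)!\bigr)$, then $|\mathcal{A}'|>\binom{n-s-1}{r-s-1}=\alpha(G(n,r,s))$ by Theorem~\ref{alpha}, so $\mathcal{A}'$ is not independent.
\item Lemma~\ref{KupSh} then gives $e(I)\ge e(\mathcal{A}')\ge \frac{(r+s)\,c(r,2r)^2}{4r\binom ns}|I|^2$.
\end{itemize}

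\textbf{The 0-statement.} The triangle check you skipped does go through, for the following reason.
\begin{itemize}
\item Neighborhoods in $G(n,r,s)$ are dense. If $|u\cap v|=s$, each of the $\Theta(n^{r-s})$ sets $w$ with $w\cap v=u\cap v$ and $w\setminus v$ disjoint from $u$ is adjacent to both. Hence $G_p$ has $\Theta(pNd^2)$ triangles, where $N=\binom nr$ and $d\asymp pn^{r-s}$.
\item By monotonicity you may assume $p_0/\ln n\le p=o(p_0)$. Then $p=d^{-\delta+o(1)}$ with $\delta=\frac{r-2s-1}{s+1}>0$.
\item This is where $r>2s+1$ is used. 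It is not needed for ``$d\to\infty$ polynomially'', which holds for every $r$.
\item Sample vertices with probability $d^{-1+\delta/3}$, delete one vertex from each surviving triangle, and apply the Ajtai--Koml\'os--Szemer\'edi/Shearer bound. This gives $\alpha(G_p)\gtrsim \delta N\ln d/d\asymp n^{r-s-1}p_0/p$, as you intended.
\end{itemize}
The opening star-plus-$W$ attempt can be dropped.
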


The exact stability of $G(n,r,s)$ was also studied by Pyaderkin. In \cite{PyadBug} he claimed the following result.

\begin{theorem}[\cite{PyadBug, PyaderkinFix}]\label{Gnr1}
    Fix $r \geq 5$. Then w.h.p. $\alpha(G_{1/2}(n,r,1)) = \alpha(G(n,r,1))$.
\end{theorem}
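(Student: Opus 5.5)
We want to show that for fixed $r\ge 5$ and $s=1$, with high probability every independent set of $G_{1/2}(n,r,1)$ has size at most $\binom{n-2}{r-2}$. By Theorem \ref{alpha} (here $r>2s+1=3$) this equals $\alpha(G(n,r,1))$ for large $n$, and the star $V_S$ with $|S|=2$ shows it is attained. So only the upper bound needs proof.

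\textbf{Step 1: reduction to near-extremal families.} Suppose $\mathcal{F}$ is independent in $G_{1/2}(n,r,1)$ with $|\mathcal{F}|\ge \binom{n-2}{r-2}$. The number of edges of $G(n,r,1)$ inside $\mathcal{F}$ is controlled by a supersaturation/removal statement for the Frankl--F\"uredi problem. Roughly, a family of size $\Theta(n^{r-2})$ spans many pairs meeting in exactly one element unless it is essentially contained in a bounded number of stars $V_S$. Combined with Theorem \ref{weakSt}, whose threshold $\ln n/n^{r-3}$ is far below $1/2$, this should give the following. W.h.p. any such $\mathcal{F}$ has $|\mathcal{F}\setminus V_S|=o(n^{r-2})$ for some pair $S$. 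To get there, take a union bound over ``cores'' of $\mathcal{F}$ in the style of container arguments. The count of candidate families with many internal $G(n,r,1)$-edges is beaten by the factor $2^{-e(\mathcal{F})}$, because each $r$-set meets $\Theta(n^{r-1})$ others in exactly one element.

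\textbf{Step 2: local stability around a star.} Write $\mathcal{F}=\mathcal{A}\cup\mathcal{B}$, where $\mathcal{A}=\mathcal{F}\cap V_S$, $\mathcal{B}=\mathcal{F}\setminus V_S$, and $|\mathcal{B}|=k\ge 1$. Then $|V_S\setminus\mathcal{A}|\le k$. Fix $B\in\mathcal{B}$.
\begin{itemize}
\item If $|B\cap S|=1$, then $B$ meets exactly one element of $S$ in exactly one point on $S$. The sets in $V_S$ that meet $B$ in only that point number $\Theta(n^{r-2})$.
\item If $|B\cap S|=0$, pick $x\in S$ together with one point of $B$; sets $A\supset S$ with $|A\cap B|=1$ again number $\Theta(n^{r-2})$.
\end{itemize}
So each $B$ has degree $d=\Theta(n^{r-2})$ into $V_S$ in $G(n,r,1)$. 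In $G_{1/2}$, the probability that $\mathcal{F}$ is independent is at most $2^{-(\text{number of }G\text{-edges between }\mathcal{A}\text{ and }\mathcal{B})}$. Since the sets removed from $V_S$ number at most $k$, the edges from $\mathcal{B}$ to $\mathcal{A}$ number at least $k(d-k)$ when $k\le d/2$. The number of choices is at most $\binom{n}{2}\binom{\binom nr}{k}\binom{\binom{n-2}{r-2}}{k}\le n^{2+2rk}$. Hence the sum over $1\le k\le \varepsilon n^{r-2}$ of $n^{2+2rk}2^{-kd/2}$ is $o(1)$, because $d\gg \log n$. Here we need $r-2\ge 1$, which holds with plenty of room.

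\textbf{Main obstacle.} The hard part is Step 1, namely ruling out the intermediate regime where $\mathcal{B}$ has size between $\varepsilon n^{r-2}$ and the whole family. There the naive union bound over $\binom{\binom nr}{|\mathcal{F}|}$ families is far too large. I would handle it by a graph-container argument for $G(n,r,1)$, which needs a supersaturation lemma. The lemma to prove is this: any family of size $\ge(1-\delta)\binom{n-2}{r-2}$ that is $\varepsilon n^{r-2}$-far from every star spans at least $c\,n^{r-2}\cdot n^{r-1}$ edges of $G(n,r,1)$. I would try to prove it via the Frankl--F\"uredi delta-system method: remove a small number of sets to get a family where every set has a kernel of size $2$, then count exactly-one-intersections between different kernel classes. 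The condition $r\ge5$ (that is, $r>2s+2$) is what I expect to make the delta-system counting give a linear-in-$n$ gain per set. With enough edges, the containers have size $(1-\delta)\binom{n-2}{r-2}$ and number $2^{o(n^{r-2})}$, so the $2^{-\Omega(\cdot)}$ edge penalty wins.
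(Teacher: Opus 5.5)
Your architecture (global stability, then local analysis around one star) is the same as in the paper's proof of Theorem \ref{main}, which contains this statement as the case $s=1$, $p_1=1/2$. However, both of your steps have genuine gaps.

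\textbf{Step 1.} This is the heart of the theorem, and it is only asserted. Theorem \ref{weakSt} gives $\alpha(G_p)=\Theta(\alpha(G))$ and no structural information. The supersaturation lemma you want to feed into containers is false as stated. A family of size $O(n^{r-2})$ has at most $O(n^{2r-4})$ pairs, so it cannot span $c\,n^{r-2}\cdot n^{r-1}$ edges; the $\Theta(n^{r-1})$ degree is the degree in all of $G(n,r,1)$, not inside $\mathcal{F}$. The right order is $n^{2r-5}$: two half-stars on disjoint pairs have only $\Theta(n^{2r-5})$ edges.

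Containers are also unnecessary at $p=1/2$. The paper indexes families by their distance $k=d(\mathcal{F})$ from the nearest star; there are $e^{O(k\ln n)}$ such families. It then proves the deterministic bound $e(\mathcal{F})=\omega(k\ln n)$ for every $\mathcal{F}$ of size $\binom{n-2}{r-2}+1$, and a union bound finishes. That supersaturation is exactly what you leave open. The paper gets it as follows:
\begin{enumerate}
\item Theorem \ref{struct} decomposes $\mathcal{F}$ into $r$-partite pieces of size $\Omega(n^{r-2}/f)$.
\item By Lemma \ref{KupSh}, a non-independent piece has $\Omega(n^{2r-5}/f^2)$ edges.
\item If all pieces are independent, Lemmas \ref{rank} and \ref{centre} show each piece has a centre, and a Kruskal--Katona shadow count shows almost all of $\mathcal{F}$ lies in one star.
\end{enumerate}
Pointing to the delta-system method is a plan, not a proof.

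\textbf{Step 2.} Here the degrees are miscounted. If $B\cap S=\emptyset$, the sets $A\supseteq S$ with $|A\cap B|=1$ are $S\cup\{y\}\cup T$ with $y\in B$ and $|T|=r-3$. So there are only $r\binom{n-r-2}{r-3}=\Theta(n^{r-3})$ of them, not $\Theta(n^{r-2})$. Your bound $k(d-k)$ therefore covers only $k=O(n^{r-3})$.

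For $n^{r-3}\ll k\le\varepsilon n^{r-2}$, counting $\mathcal{A}$--$\mathcal{B}$ edges cannot work at all. Take $U$ disjoint from $S$ with $|U|=u\approx Cn^{(r-3)/(r-1)}$. Delete from $V_S$ all $\approx u\binom{n}{r-3}$ sets meeting $U$, and let $\mathcal{B}\subseteq\binom{U}{r}$ have the matching size. Then $e(\mathcal{A},\mathcal{B})=0$, while $k=o(n^{r-2})$. Such families do have many edges, but they lie inside $\mathcal{B}$, which you never count.

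The paper handles this regime as follows:
\begin{enumerate}
\item It isolates the low-degree part $\mathcal{B}'$.
\item It proves the shadow inequality (\ref{lst}): each low-degree set forces many deleted star members.
\item It applies Theorem \ref{struct} a second time, to $\mathcal{B}'_Q(Q_0)$. If the result contains an edge, Lemma \ref{KupSh} with (\ref{lst}) gives $\Omega(k\,n^{r-3+|Q_0|})$ edges. If it is independent, Theorem \ref{shadow_for_ind} together with Kruskal--Katona and (\ref{lst}) gives a contradiction.
\end{enumerate}
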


In fact even a stronger result (for a wider range of $r$) was claimed, but unfortunately the paper contained an error in one of the lemmas. This error was corrected for fixed $r \geq 5$ in \cite{PyaderkinFix}.

Finally, the same problem was studied in \cite{Pyaderkin2015EKR, OgRai, KarOgRai, RaiKar2022, PokhachevskiyRaigorodskii2026} for a similar family of $G(n,r,<s)$ graphs, where an edge is drawn between subsets that share less than $s$ elements. 

In this paper we significantly generalize Theorem \ref{Gnr1} by widening both the range of applicable $s$ and $p$. We prove the following theorem.

\begin{theorem}\label{main}
    Fix $s \geq 1$, $r > 2s+2$ and $\varepsilon \in (0, 1)$. Let $p_0(n)$, $p_1(n)$ be functions such that $$p_0(n) \leq (1-\varepsilon)\frac{\ln\left(\binom{n}{s+1}\binom{n-s-1}{r}\right)}{\binom{r}{s}\binom{n-r-s-1}{r-2s-1}},$$ 
    $$p_1(n) = \omega\left(\frac{\ln\left(\binom{n}{s+1}\binom{n-s-1}{r}\right)}{\binom{r}{s}\binom{n-r-s-1}{r-2s-1}}\right) = \omega\left(\frac{\ln n}{n^{r-2s-1}}\right).$$
    W.h.p. we have $\alpha(G_{p_0}(n,r,s)) > \alpha(G(n,r,s))$ and $\alpha(G_{p_1}(n,r,s)) = \alpha(G(n,r,s))$.
\end{theorem}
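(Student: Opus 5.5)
The proof splits into the two regimes. Throughout, $\mathcal{S}=\binom{[n]}{s+1}$ and, for $S\in\mathcal{S}$, $V_S$ is the star from the introduction. By Theorem~\ref{alpha}, for large $n$ the target value is $|V_S|=\binom{n-s-1}{r-s-1}$.

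\emph{Lower regime ($p_0$): second moment on one-vertex extensions.} Fix $S$ and a vertex $v\notin V_S$, i.e. $|v\cap S|\le s$. We count the neighbours of $v$ inside $V_S$ in $G(n,r,s)$. Take $|v\cap S|=s$, so $|v\setminus S|=r-s$. A vertex $u\supset S$ has $|u\cap v|=s$ exactly when $u$ contains no element of $v\setminus S$. The number of such $u$ is $\binom{n-r-1}{r-s-1}$, which is much more than $\binom{r}{s}\binom{n-r-s-1}{r-2s-1}$. So this case is not the minimiser.

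Now take $|v\cap S|=0$. Then $u=S\cup w$ with $|w|=r-s-1$ and $w$ disjoint from $S$, and we need $|w\cap v|=s$. The count is $\binom{r}{s}\binom{n-r-s-1}{r-2s-1}$, which is exactly the denominator in the statement. One checks that the intermediate cases $1\le|v\cap S|<s$ give more neighbours, because fewer intersection elements need to be chosen from $v$. So the "cheapest" extensions are the pairs $(S,v)$ with $v\cap S=\emptyset$. There are about $\binom{n}{s+1}\binom{n-s-1}{r}=:N$ of them, and each has $D=\binom{r}{s}\binom{n-r-s-1}{r-2s-1}$ potential edges into $V_S$.

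Let $X$ be the number of such pairs for which all $D$ edges are deleted, so that $V_S\cup\{v\}$ is independent and $\alpha$ exceeds the bound. Then $\mathbf{E}X=N(1-p)^D\ge \exp(\ln N-(1+o(1))pD)\to\infty$ when $p\le(1-\varepsilon)\ln N/D$. (If $p$ is tiny, monotonicity handles it, or the bound is trivial.) We show $\mathrm{Var}X=o((\mathbf{E}X)^2)$ by analysing overlaps of edge sets for pairs $(S,v),(S',v')$. Two cases have dependence:
\begin{itemize}
\item $v=v'$ with $S\ne S'$: the common edges are from $v$ to $V_S\cap V_{S'}$, of which there are $O(n^{r-2s-2})$.
\item $S=S'$: the edge sets are disjoint unless $v\sim v'$.
\end{itemize}
Each case contributes $o((\mathbf{E}X)^2)$, since the number of correlated pairs is a lower-order fraction of $N^2$ and the correlation factor $(1-p)^{-|\text{overlap}|}$ is $e^{o(\ln N)}$-controlled. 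Chebyshev then gives $X>0$ w.h.p.

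\emph{Upper regime ($p_1$): union bound over structured families.} Fix an independent set $A$ of $G_{p_1}$ with $|A|\ge\binom{n-s-1}{r-s-1}$. The plan uses stability-type structure for families without intersection $s$, in the spirit of Frankl--F\"uredi and Theorem~\ref{weakSt}, and splits into two cases.

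\emph{Case 1: $A$ is far from every star.} Then $A$ spans at least $c\,|A|\,n^{r-2s-1}$ edges of $G(n,r,s)$ for some $c>0$. Here we use a supersaturation version of Theorem~\ref{alpha}, which we would have to prove via the kernel/delta-system method, as in Pyaderkin's proof of Theorem~\ref{weakSt}. We then bound the number of such $A$ using graph containers, so that there are few containers, each giving a $(1-p)^{\Omega(|A| n^{r-2s-1})}$ survival probability. This beats the number of choices because $p_1 n^{r-2s-1}=\omega(\ln n)$.

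\emph{Case 2: $A$ is close to some $V_S$.} Write $A=(V_S\setminus B)\cup C$ with $C\cap V_S=\emptyset$ and $|C|\ge|B|\ge0$; we need $C=\emptyset$ w.h.p. Each $v\in C$ has at least $D$ edges into $V_S$, and the edges to $V_S\setminus B$ must all be deleted. The main estimate is that these edge sets are nearly disjoint for distinct $v$, and that removing $|B|\le|C|$ vertices destroys only a small fraction of each $v$'s $D$ edges. Then
\[
\mathbf{P}\le\sum_{k\ge1}\binom{n}{s+1}\binom{\binom{n}{r}}{k}\binom{\binom{n-s-1}{r-s-1}}{k}(1-p_1)^{(1-o(1))kD},
\]
which is $o(1)$ since $p_1D=\omega(\ln N)$.

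The main obstacle is making this last step honest when $|B|$ is large or when $B$ is concentrated on the neighbourhood of $C$. A vertex $v$ whose neighbourhood in $V_S$ has been largely removed escapes the count. I would handle this by a degree argument: each $u\in B$ is adjacent to only about $|C|D/|V_S|$ vertices of $C$ on average. If that is not enough, I would iterate, choosing $S$ to maximise $|A\cap V_S|$ and comparing against $r>2s+2$, which gives the extra factor of $n$ of slack that the proof needs.
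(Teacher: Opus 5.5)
Your lower regime is correct and is the paper's argument: a second moment over pairs $(S,v)$ with $v\cap S=\varnothing$. Pairs with $v\ne v'$ share at most one edge, and pairs with $v=v'$, $S\ne S'$ share $O(n^{r-2s-2})$ edges, so every correlation factor is $1+o(1)$ because $p_0=O(\ln n/n^{r-2s-1})$. The skeleton of your upper regime also matches the paper: a union bound over families of size $\binom{n-s-1}{r-s-1}+1$, indexed by the distance $k$ to the best star. That union bound, however, requires a deterministic fact: \emph{every} such family must span $\omega(k\ln n/p_1)$ edges of $G(n,r,s)$. Neither of your two cases establishes this.

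In Case 2 your ``main estimate'' is false, not merely hard to make honest.
\begin{itemize}
\item \emph{Counterexample.} Let $Y\subset[n]\setminus S$ with $|Y|=y=Kn^{(r-2s-1)/(r-s)}$, and let $B_0=\{u\in V_S: |(u\setminus S)\cap Y|\ge s\}$. Then $|B_0|=\Theta(y^sn^{r-2s-1})$, which is below $\binom{y}{r}$ for a large constant $K$. Every $v\in\binom{Y}{r}$ has \emph{all} of its neighbours in $V_S$ inside $B_0$. Take $C\subset\binom{Y}{r}$ with $|C|=|B_0|+1$ and set $A=(V_S\setminus B_0)\cup C$. Then $k=\Theta(n^{r(r-2s-1)/(r-s)})=o(n^{r-s-1})$, since $r(r-2s-1)<(r-s)(r-s-1)$. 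Yet there are no edges at all between $C$ and $V_S\setminus B_0$, so every edge of $A$ lies inside $C$.
\item \emph{Why your fixes fail.} No argument that counts only edges from $C$ to the star can work here, so degree averaging over $B$ and re-choosing $S$ do not help. You also only need $\omega(k\ln n/p_1)$ edges, not $(1-o(1))kD$.
\end{itemize}
The missing idea, as the paper supplies it:
\begin{itemize}
\item Call an outside vertex low-degree if it has at most $\frac12\binom{n-r-s-1}{r-2s-1}$ neighbours in $A\cap V_S$. If at most half of the outside vertices are low-degree, you get $\Omega(kn^{r-2s-1})$ edges directly.
\item Otherwise, fix a trace $Q_0\subsetneq S$ shared by a constant fraction $\mathcal{B}'_{Q_0}$ of the low-degree vertices. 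Double counting against the fewer than $k$ missing star vertices gives $k\gtrsim|\partial^{(s-|Q_0|)}(\mathcal{B}'_{Q_0}(Q_0))|\,n^{r-2s-1+|Q_0|}$.
\item Regularise $\mathcal{B}'_{Q_0}(Q_0)$ with Jiang--Longbrake to get a piece $\mathcal{B}''$. If $\mathcal{B}''$ has an edge, Kupavskii--Shubin's Claim~12 gives $\gtrsim|\mathcal{B}''|^2/|\partial^{(s-|Q_0|)}(\mathcal{B}'')|\gtrsim kn^{r-2s-1+|Q_0|}$ edges \emph{inside} the outside family.
\item If $\mathcal{B}''$ is independent, Frankl--F\"uredi's Theorem~5.1 gives $|\partial^{(r-s-1)}(\mathcal{B}'')|\ge c|\mathcal{B}''|$. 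Writing this shadow as $\binom{x}{r-s-1}$, Kruskal--Katona gives $x^{r-s-1}\gtrsim x^{s-|Q_0|}n^{r-2s-1+|Q_0|}$, which is impossible since $x=o(n)$.
\end{itemize}

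Case 1 rests on an unproved supersaturation version of the Frankl--F\"uredi theorem, which is the other half of the difficulty. Your far/close split is also never quantified, so the two cases are not shown to cover all families. The paper needs no threshold:
\begin{itemize}
\item It splits $\mathcal{F}$ into Jiang--Longbrake $r$-partite pieces of size $\gtrsim n^{r-s-1}/f$, where $f\to\infty$ slowly enough that $f^2=o(p_1n^{r-2s-1}/\ln n)$.
\item If some piece has an edge, Claim~12 gives $\gtrsim n^{2r-3s-2}/f^2=\omega(k\ln n/p_1)$ edges.
\item Otherwise, Claim~11 and Frankl--F\"uredi's Lemma~5.5 give each piece a centre. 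This is exactly where $r>2s+2$ is used, not as ``an extra factor of $n$ of slack''.
\item A shadow-counting argument finishes: link shadows of distinct centres are disjoint within a piece, a cleaning step follows, and then Kruskal--Katona applies. Either different centre classes span many edges, or one star contains $(1-o(1))\binom{n}{r-s-1}$ members of $\mathcal{F}$.
\end{itemize}
Graph containers are also unnecessary: with an edge count of this order, the plain union bound already suffices.
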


\begin{remark}
    Note that Theorem \ref{main} gives us the coarse probability threshold for the property $\alpha(G_p(n,r,s)) = \alpha(G(n,r,s))$. Moreover, it is likely that the value 
    $$
    \frac{\ln\left(\binom{n}{s+1}\binom{n-s-1}{r}\right)}{\binom{r}{s}\binom{n-r-s-1}{r-2s-1}}
    $$
    is an exact probability threshold for this property. 
\end{remark}

\begin{remark}
    Another thing to note here is that we need a slightly stronger assumption on $r$ in comparison with Theorem \ref{weakSt}. While we believe that Theorem \ref{main} holds for $r = 2s+2$ as well, this case seems to be out of reach for our technique.
\end{remark}

\section{Preliminaries}

Let us introduce some auxiliary definitions and results. The key part of the proof is a structural theorem due to Jiang and Longbrake (see \cite{JiangLongbrake2026}). Before we state it we need to introduce a bunch of definitions.

\begin{definition}
    Let $\cA$ be a set of vertices of $G(n, r, s)$ such that there exists a partition of $[n] = \{1, \dotsc, n\}$ into $r$ subsets $[n] = X_1 \sqcup \dotsc \sqcup X_r$ with the following property: for each $v \in \cA$ and each $i \leq r$ we have $|v \cap X_i| = 1$. We then say that $\cA$ is $r$-partite with parts $X_1, \dotsc, X_r$. 
\end{definition}

\begin{definition}
    Let $\cA$ be a set of vertices of $G(n, r, s)$. Consider a vertex $v \in \cA$. The set $\cA|_v = \{u \cap v \mid u \in \cA \setminus \{v\}\}$ is called the trace of $v$ in $\cA$.
\end{definition}

\begin{definition}
    Let $\cA$ be an $r$-partite set of vertices of $G(n, r, s)$ with parts $X_1, \dotsc, X_r$. Define projection $\pi_{\cA} : 2^{[n]} \to 2^{[r]}$ as follows.
    $$
        \pi_{\cA}(Y) = \{i \in [r] \mid Y \cap X_i \neq \varnothing\}.
    $$
\end{definition}

\begin{definition}
    Let $\cA$ be an $r$-partite set of vertices of $G(n, r, s)$ with parts $X_1, \dotsc, X_r$. The set 
    $$
        \textrm{Int}(\cA) = \{\pi_{\cA}(u \cap v) \mid u, v \in \cA, u \neq v\}
    $$
    is called the intersection structure of $\cA$.
\end{definition}

\begin{definition}
    Let $\cA$ be an $r$-partite set of vertices of $G(n, r, s)$ with parts $X_1, \dotsc, X_r$. Consider a vertex $v$ from $\cA$ and a set $J \subset [r]$. We call $v_J := v \cap \left(\bigcup_{j \in J} X_j\right)$ the narrowing of $v$ on $J$.
\end{definition}

We also need the following notation:

$$
    A(X) = \{v \setminus X \mid v \in A, X \subseteq v\}.
$$

We are finally ready to formulate the theorem of Jiang and Longbrake.

\begin{theorem}[\cite{JiangLongbrake2026}]
    \label{struct}
    Let $\cA$ be a set of vertices of $G(n,r,s)$. Fix an integer $a$ such that $a \geq 2r$. There exists $c = c(r, a) > 0$ (note that $c$ does not depend on $\cA$) and $\cA^{\prime} \subset \cA$ such that
    \begin{enumerate}
        \item $\cA^{\prime}$ is $r$-partite;

        \item $|
        \cA^{\prime}| \geq c(r, a) |\cA|$;

        \item For each $J \in \textrm{Int}(\cA^{\prime})$ and each $v \in \cA^{\prime}$ we have
        $$
            |\cA^{\prime}(v_J)| \geq \max\left\{a, \frac{|\cA^{\prime}|}{2rn^{|J|}}\right\}.
        $$

        \item For each $J \in \textrm{Int}(\cA^{\prime})$ and each $v \in \cA^{\prime}$ $C = \cA^{\prime}(v_J)$ is $a$-diverse (that is, $|C(\{i\})| \leq \frac{1}{a}|C|$ for each $i \in [n]$).

        \item $\textrm{Int}(\cA^{\prime})$ is closed under intersections.
    \end{enumerate}
\end{theorem}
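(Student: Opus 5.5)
Since Theorem~\ref{struct} is quoted from \cite{JiangLongbrake2026}, here is how I would reconstruct it, following Füredi's intersection-semilattice method (the delta-system lemma) and then adding a cleaning procedure. The constant $c(r,a)$ will come from a bounded number of steps, each of which loses only a factor depending on $r$ and $a$. The central difficulty is that the cleaning steps can undo one another, and the plan handles this by iterating with a potential argument.

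\textbf{Step 1 (reduction to $r$-partite families).} Colour $[n]$ uniformly at random with $r$ colours, giving parts $X_1,\dots,X_r$. A fixed $r$-set gets all $r$ colours with probability $r!/r^r$. So some partition keeps at least $(r!/r^r)|\cA|$ members of $\cA$, and this subfamily $\cA_1$ is $r$-partite. From now on every family is a subfamily of $\cA_1$, so $\pi$ and the narrowings $v_J$ are well defined.

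\textbf{Step 2 (Füredi homogenisation).} Apply Füredi's theorem to $\cA_1$ with sunflower parameter $a$. It gives $\cA_2\subset\cA_1$ with $|\cA_2|\ge c_1(r,a)|\cA_1|$ having three properties. First, all traces have the same projected type, that is, $\pi(\cA_2|_v)$ is the same family $\mathcal{J}\subset 2^{[r]}$ for every $v$. Second, $\mathcal{J}$ is closed under intersection; this gives property~5, since $\Int(\cA_2)=\mathcal{J}$. Third, for every $v$ and every $J\in\mathcal{J}$, the set $v_J$ is the kernel of a sunflower in $\cA_2$ with at least $a$ petals. The existence of such a sunflower already gives the lower bound $a$ in property~3. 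It also gives a weak form of diversity, because a sunflower with $a$ petals forbids any single element from covering all of $\cA_2(v_J)$.

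\textbf{Step 3 (cleaning for size and diversity).} Repeatedly delete a vertex $v$ whenever some $J\in\mathcal{J}$ has
$$|\cA'(v_J)|<\max\left\{a,\frac{|\cA'|}{2rn^{|J|}}\right\}.$$
The deletions made for the bound $|\cA'|/(2rn^{|J|})$ can be charged to pairs $(J,v_J)$. There are at most $2^r$ choices of $J$ and at most $n^{|J|}$ choices of $v_J$ for each. So at most $\sum_J n^{|J|}\cdot |\cA'|/(2rn^{|J|})$ vertices are lost, which after restricting to the relevant $J$ (or taking a slightly larger constant in place of $2r$) is at most half the family.

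For diversity (property~4), if $C=\cA'(v_J)$ has an element $i$ with $|C(\{i\})|>|C|/a$, replace the kernel by $v_J\cup\{i\}$. This means passing to sets containing the larger narrowing, which strictly enlarges $J$ to $J\cup\{\text{part of } i\}$. Each such refinement costs a factor of at most $a$.

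\textbf{Step 4 (termination) — the main obstacle.} Deletions in Step~3 may shrink $\Int(\cA')$ or destroy the sunflower and diversity properties established earlier, so the steps must be re-run. I would use the potential given by the pair $(\Int(\cA'),\ \text{current kernels})$. This lies in a finite poset of size bounded in terms of $r$, and each violation forces a strict move in it: either a refinement, or a re-application of Füredi's theorem with fewer intersection types. Hence the process stops after $f(r)$ rounds, each losing at most a factor $c_2(r,a)$. The final constant is then $c(r,a)=(r!/r^r)\,c_2(r,a)^{f(r)}$.

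The delicate point is showing that a diversity refinement never re-creates a size violation at a higher level. To arrange this I would run the process level by level in $|J|$, from large $J$ down to small $J$.
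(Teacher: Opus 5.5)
The paper does not prove this statement. It quotes it from \cite{JiangLongbrake2026} and uses it as a black box, so your sketch has to stand on its own.

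\textbf{What works.} Steps 1--2 are fine. F\"uredi's lemma with sunflower parameter $a$ gives an $r$-partite homogeneous subfamily with $\Int(\cA_2)=\mathcal{J}$ closed under intersection and $|\cA_2(v_J)|\ge a$.

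\textbf{The charging in Step 3.} It needs a repair, and it only covers part of the deletions. Summing over $2^r$ types gives $2^r|\cA'|/(2r)$, and you may not replace $2r$, since it is fixed in the statement. Instead, count transversal sets $T$ by size: there are at most $\binom{n}{j}\le n^j$ of size $j$ in total. Killing each $T$ at most once then costs at most $r\cdot |\cA_0|/(2r)=|\cA_0|/2$ for the threshold $|\cA'|/(2rn^{|J|})$. Deletions for the threshold $a$, however, cannot be charged at all. There may be up to $|\cA'|$ light narrowings, far more than $|\cA'|/a$.

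\textbf{The genuine gap.} Property 4, together with the $a$-part of property 3, is not established by Steps 3--4.

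First, the ``refinement'' is not an admissible move. Property 4 must hold for the original narrowing $v_J$, for every $v\in\cA'$ and every $J\in\Int(\cA')$, so you cannot trade $v_J$ for $v_J\cup\{i\}$. Since $\pi(u\cap w)=J$ forces $u\cap w=u_J=w_J$, restricting a non-diverse link to its members through $i$ helps only if you do it to every $J$-narrowing that still realises $J$. Only then does $J$ drop out of $\Int(\cA')$. If even one diverse kernel of type $J$ survives, $J$ stays in $\Int(\cA')$, and every refined link has $i$ in all of its members, which is the worst possible violation of property 4. So each type forces a global dichotomy. Either refine all non-diverse or light kernels of type $J$ and delete the rest, or delete the non-diverse or light ones. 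The second branch can cost almost the whole family unless you choose by weight, and you give no accounting for either branch.

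Second, diversity is a ratio condition and is not monotone under deletion. Removing whole $J$-links leaves links of types $K\supseteq J$ either intact or entirely removed. It cuts links of types $K\not\supseteq J$ only partially, which can raise the share of a single element. So cleaning at one type can create violations at incomparable types, whatever order in $|J|$ you process them. Your top-down suggestion does not address this.

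Third, the potential in Step 4 is not bounded in $r$, because the kernels are subsets of $[n]$. The natural bounded monotone quantity is $\Int(\cA')$ itself, which can only shrink under passing to subfamilies. Rounds that delete bad links while $\Int(\cA')$ stays unchanged make no progress in it. Nothing then bounds the number of rounds, and an unbounded product of constant-factor losses is not a constant.

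\textbf{What is missing.} You need a scheme in which every operation either strictly shrinks $\Int(\cA')$ at cost $O_{r,a}(1)$, or deletes only material charged once to a budget fixed in advance. You also need a proof that properties 3--4 hold when the scheme stops. That is the substance of the theorem beyond F\"uredi's lemma, and your proposal leaves it unproved.
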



The second theorem we need is the famous Kruskal--Katona theorem (see \cite{Kruskal1963, Katona1968}). Let us start with the definition of the $i$-shadow.

\begin{definition}
    Let $\mathcal{F}$ be a family of sets. Define its $i$-shadow as the family of all $i$-element sets $U$ such that there is at least one set $F \in \mathcal{F}$ that is a superset of $U$. We denote the $i$-shadow of $\mathcal{F}$ as $\partial^{(i)}(\mathcal{F})$.
\end{definition}

Now we are able to formulate the theorem.

\begin{theorem}[\cite{Kruskal1963,Katona1968}]
    \label{KrKat}
    Let $\mathcal{F}$ be a family of $r$-element subsets. Let also $|\mathcal{F}| = \binom{x}{r} := \frac{x(x-1)\dotsc(x-r+1)}{r!}$, where $x$ may be non-integer. Then for each $s < r$ we have
    $$
        |\partial^{(s)}(\mathcal{F})| \geq \binom{x}{s}.
    $$
\end{theorem}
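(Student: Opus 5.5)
I would prove the Kruskal--Katona theorem in the Lov\'asz form stated, by shifting (compression) followed by a double induction on $r$ and $|\mathcal{F}|$. First I reduce to the case $s=r-1$. The $s$-shadow of $\mathcal{F}$ is the iterated one-step shadow, and $y\mapsto\binom{y}{k}$ is continuous and strictly increasing for $y\ge k-1$. So if $|\partial^{(r-1)}(\mathcal{F})|\ge\binom{x}{r-1}$, we can write $|\partial^{(r-1)}(\mathcal{F})|=\binom{x'}{r-1}$ with $x'\ge x$. Applying the one-step statement to the $(r-1)$-uniform family then gives $|\partial^{(r-2)}(\mathcal{F})|\ge\binom{x'}{r-2}\ge\binom{x}{r-2}$, and iterating reaches any $s<r$. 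Throughout we may take $x\ge r$ (or $x\ge r-1$), since $|\mathcal{F}|\ge1$ makes this the relevant branch of the polynomial.

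The first key step is compression. For $i<j$, the shift $S_{ij}$ replaces each $F\in\mathcal{F}$ with $j\in F$, $i\notin F$ by $(F\setminus\{j\})\cup\{i\}$, provided the latter is not already in $\mathcal{F}$. It preserves $|\mathcal{F}|$, and I would verify the standard inclusion $\partial^{(r-1)}(S_{ij}(\mathcal{F}))\subseteq S_{ij}(\partial^{(r-1)}(\mathcal{F}))$. This is a case check on a shadow set $G=F'\setminus\{k\}$ of a shifted set $F'$, according to whether $F'$ was moved and whether $i,j\in G$. Since $\sum_{F\in\mathcal{F}}\sum_{a\in F}a$ strictly decreases under every nontrivial shift, finitely many shifts give a family that is invariant under all $S_{ij}$, that is, shifted, and whose shadow is no larger. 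So we may assume $\mathcal{F}$ is shifted.

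Next I split on the element $1$. Let $\mathcal{F}_0=\{F\in\mathcal{F}:1\notin F\}$ and $\mathcal{F}_1=\{F\setminus\{1\}:1\in F\in\mathcal{F}\}$. Shiftedness gives $\partial^{(r-1)}(\mathcal{F}_0)\subseteq\mathcal{F}_1$: if $G=F\setminus\{k\}$ with $F\in\mathcal{F}_0$, then shifting $k$ to $1$ shows $G\cup\{1\}\in\mathcal{F}$.

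From this I claim $|\mathcal{F}_1|\ge\binom{x-1}{r-1}$. Suppose instead $|\mathcal{F}_1|<\binom{x-1}{r-1}$. Then $|\mathcal{F}_0|>\binom{x}{r}-\binom{x-1}{r-1}=\binom{x-1}{r}$. Since $\mathcal{F}_0$ is a proper subfamily (or the case $\mathcal{F}_1=\varnothing$ is trivial), induction on $|\mathcal{F}|$ applies. It gives $|\partial^{(r-1)}(\mathcal{F}_0)|\ge\binom{x-1}{r-1}>|\mathcal{F}_1|$, contradicting the inclusion.

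Finally, $\partial^{(r-1)}(\mathcal{F})$ contains the two disjoint families $\mathcal{F}_1$ (sets without $1$) and $\{G\cup\{1\}:G\in\partial^{(r-2)}(\mathcal{F}_1)\}$ (sets with $1$). By induction on $r$ applied to the $(r-1)$-uniform family $\mathcal{F}_1$, together with monotonicity, we get
$$|\partial^{(r-1)}(\mathcal{F})|\ge\binom{x-1}{r-1}+\binom{x-1}{r-2}=\binom{x}{r-1}.$$
The base cases are $r=1$ and $|\mathcal{F}|=1$, both immediate.

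The main obstacle I expect is the shifting lemma, that the shadow does not grow under $S_{ij}$. Its case analysis must be done carefully. A secondary technical point is the non-integer $x$ at the boundary: when $x-1<r-1$, monotonicity of $\binom{y}{k}$ can fail, and those small cases need to be checked separately.
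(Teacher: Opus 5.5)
Your argument is correct. It is essentially Frankl's shifting proof of Lov\'asz's real-$x$ form of the Kruskal--Katona theorem: you compress, split on the element $1$, and use $\partial^{(r-1)}(\mathcal{F}_0)\subseteq\mathcal{F}_1$ to force $|\mathcal{F}_1|\ge\binom{x-1}{r-1}$. You then add $|\mathcal{F}_1|+|\partial^{(r-2)}(\mathcal{F}_1)|$ and finish with Pascal's rule.

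The paper, by contrast, gives no proof at all and simply quotes the result from Kruskal and Katona. Their original arguments establish the exact bound via the cascade representation $|\mathcal{F}|=\binom{a_r}{r}+\binom{a_{r-1}}{r-1}+\dotsb$, and the real-$x$ version stated here is derived from that. Your route gives a self-contained proof of exactly the form the paper uses, with no cascades. The one ingredient you leave as a case check is the standard shifting inequality $\partial^{(r-1)}(S_{ij}(\mathcal{F}))\subseteq S_{ij}(\partial^{(r-1)}(\mathcal{F}))$.

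Two small remarks.

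First, the boundary worry you flag is vacuous. Once you normalize $x\ge r$, every coefficient $\binom{\cdot}{k}$ you evaluate has upper argument at least $k-1$, so you always stay on the increasing branch and no small cases need separate treatment. This covers $x-1$, $x'$, and the representatives $y$ you choose for $\mathcal{F}_0$ and $\mathcal{F}_1$ (taking $y$ at least the uniformity).

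Second, the case $\mathcal{F}_1=\varnothing$ cannot occur for a nonempty shifted family. Shifting any element of a member avoiding $1$ onto $1$ produces a member containing $1$.
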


The next theorem and lemma we need were proved in \cite{FrFu}.

The first theorem gives a lower bound on the size of $r-s-1$ shadows in an independent set of $G(n,r,s)$.

\begin{theorem}[\cite{FrFu}, Theorem 5.1]
    \label{shadow_for_ind}
    Let $\cA$ be an independent set in $G(n,r,s)$, $r \geq 2s+1$. Then $|\shadow{r-s-1}{\cA}| \geq c(r)|\cA|$ for some constant $c(r) > 0$.
\end{theorem}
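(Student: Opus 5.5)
The plan is to reduce to an $r$-partite, well-structured subfamily using Theorem~\ref{struct}, and then show that every member of that subfamily has an $(r-s-1)$-subset which lies in no other member. Such private subsets are pairwise distinct, so they inject the subfamily into $\shadow{r-s-1}{\cA}$ and give $|\shadow{r-s-1}{\cA}| \geq |\cA'| \geq c|\cA|$.

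First I apply Theorem~\ref{struct} to the independent set $\cA$ with some fixed $a \geq 2r$, to be enlarged if needed as a function of $r$ only. This gives $\cA' \subset \cA$ with the following properties:
\begin{itemize}
    \item $\cA'$ is $r$-partite with parts $X_1, \dotsc, X_r$;
    \item $|\cA'| \geq c(r,a)|\cA|$;
    \item $\Int(\cA')$ is closed under intersections;
    \item every $\cA'(v_J)$ with $J \in \Int(\cA')$ is large and $a$-diverse.
\end{itemize}
Since $\cA'$ is still independent in $G(n,r,s)$, no two members meet in exactly $s$ elements. Because $\cA'$ is $r$-partite, $|\pi_{\cA'}(u\cap v)| = |u \cap v|$, so no $J \in \Int(\cA')$ has $|J| = s$. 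It therefore suffices to prove that every $v \in \cA'$ has an $(r-s-1)$-subset $T \subset v$ contained in no other $u \in \cA'$. Since $r$ and $a$ are fixed, $c(r) := c(r,a)$ depends only on $r$.

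Suppose for contradiction that some $v \in \cA'$ has no private $(r-s-1)$-subset. Then for every $(r-s-1)$-set $I \subset [r]$ there is $u \neq v$ in $\cA'$ with $v_I \subset u$. Hence $\pi_{\cA'}(u\cap v) \supseteq I$, and $\Int(\cA')$ contains a set $J_I \supseteq I$ with $|J_I| \geq r-s-1$. Passing to complements, $\mathcal{C} = \{[r]\setminus J : J \in \Int(\cA')\}$ is closed under unions. Moreover every $(s+1)$-subset of $[r]$ contains a member of $\mathcal{C}$ of size at most $s+1$. The goal is to produce a union of members of $\mathcal{C}$ of size exactly $r-s$, i.e.\ some $J \in \Int(\cA')$ of size exactly $s$, contradicting independence.

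I expect this combinatorial step to be the main obstacle. Purely set-theoretically, unions of the small complements can jump over the value $r-s$, so closure under intersections alone does not suffice. Here I would use the diversity and largeness in items 3--4 of Theorem~\ref{struct}, in the spirit of the original Frankl--F\"uredi argument. Given $J \in \Int(\cA')$ and $v$, the family $\cA'(v_J)$ is $a$-diverse and of size at least $a$. Hence, for any prescribed set $Y$ of at most $r$ elements, we can choose $u \in \cA'$ with $u \supseteq v_J$ and $u \setminus v_J$ disjoint from $Y$, by discarding at most $r|C|/a < |C|$ members. This lets us realize intersections with prescribed projections simultaneously, which shows that $\Int(\cA')$ is in fact closed under the operations needed to shrink a set one coordinate at a time.

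Concretely, I would show by induction that whenever $J \in \Int(\cA')$ and $K \subset J$ arises as $J \cap J'$ with $J' \in \Int(\cA')$, all intermediate sizes between $|K|$ and $|J|$ can be realized. I would do this by picking a suitable $u$ through $v_{J\cap J'}$ that avoids a chosen set of coordinates, so that the intersection drops by exactly one element at each step. Since the $J_I$ have size at least $r-s-1 > s$, and intersecting enough of them yields sets of size at most $s$ (using $r \geq 2s+1$: two sets with complements of size at most $s+1$ meet in at least $r-2s-2$ coordinates, and iterating reaches $\varnothing$), the intermediate-size property produces an element of $\Int(\cA')$ of size exactly $s$. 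This is the desired contradiction, and it completes the injection and the bound.
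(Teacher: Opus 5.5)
The paper does not prove this statement; it is quoted from Frankl--F\"uredi. Your argument therefore has to stand on its own, and its one nontrivial step has a genuine gap.

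\textbf{What works.} Your reduction is sound. After Theorem~\ref{struct}, independence and $r$-partiteness mean $\Int(\cA')$ has no $s$-element member. The statement ``every $v\in\cA'$ has a private $(r-s-1)$-subset'' is equivalent to $\rk(\Int(\cA'))\le r-s-1$. If some $v$ has no private subset, then $\rk(\Int(\cA'))\ge r-s$. Everything therefore rests on one claim: an intersection-closed family of proper subsets of $[r]$ in which every $(r-s-1)$-set is covered must contain a set of size exactly $s$. You do not prove this.

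\textbf{The proposed lemma is false.} Your ``intermediate sizes'' lemma fails even under all conclusions of Theorem~\ref{struct}. Take $q\ge a$ vertex-disjoint complete stars: for each $b\le q$ choose one point of $X_i$ for $i\le s+1$ and $L\ge a$ points of $X_i$ for $i>s+1$, with all blocks disjoint. Let $\cA'$ be the set of all transversals of the $b$-th blocks, over all $b$. This family is independent, satisfies properties 1--5, and has $\Int(\cA')=\{\varnothing\}\cup\{T:[s+1]\subseteq T\subsetneq[r]\}$. With $J=[s+1]$ and $J'=\varnothing$, no member has size $1,\dots,s$.

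\textbf{Why diversity cannot help.} Diversity lets you pick $u\supseteq v_J$ with $u\setminus v_J$ avoiding a prescribed small set. That makes intersections \emph{equal} to existing kernels, which is exactly how closure under intersection is proved. It never forces $u$ to share an additional prescribed coordinate with $v$, so it cannot make an intersection drop by exactly one element. Your closing remark about iterated intersections is also beside the point: reaching size at most $s$ is easy, and landing on exactly $s$ is the whole difficulty.

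\textbf{How to close the gap.} Contrary to your remark, the missing step is a statement about intersection-closed families alone. For $r>2s+2$, the range needed later in the paper, Lemma~\ref{centre} supplies it. Suppose some $v$ has no private $(r-s-1)$-subset, so $\rk(\Int(\cA'))\ge r-s\ge r-s-1$. Then $\Int(\cA')$ has either an $s$-element member, which independence forbids, or a center $M$. In the second case, $[r]\setminus M$ has size $r-s-1$ and lies in no proper member: a member containing it would be either a superset of $M$, hence $[r]$, or a subset of $M$, which is impossible. So the rank would be exactly $r-s-1$, a contradiction. With this substitution your injection argument goes through for $r>2s+2$.

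For $r\in\{2s+1,2s+2\}$, Lemma~\ref{centre} is not available. You would need a separate combinatorial proof of the rank bound $\rk(\Int(\cA'))\le r-s-1$, which is where the content of Frankl--F\"uredi's theorem lies.
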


In order to formulate the lemma we need one more auxiliary definition.

\begin{definition}
    Let $\mathcal{M}$ be a family of subsets of $[r]$. Define its rank $\rk(\mathcal{M})$ as the minimum $l$ such that there exists $A \subset [r]$: $|A| = l$ and $A \nsubseteq M$ for any $M \in \mathcal{M} \setminus \{[r]\}$.
\end{definition}

The lemma we need gives us crucial information about the structure of systems with high rank.

\begin{lemma}[\cite{FrFu}, Lemma 5.5]
    \label{centre}
    Let $r$ and $s$ be two integers such that $r > 2s+2$. Let $\mathcal{M}$ be a family of subsets of $[r]$ with $\rk(\mathcal{M}) \geq r-s-1$. Assume that $\mathcal{M}$ is closed under intersection. Then one of the following holds.

    \begin{enumerate}
        \item There exists $M \in \mathcal{M}$ such that $|M| = s$;

        \item $\mathcal{M}$ consists of a set $M$ such that $|M| = s + 1$, all of its supersets and some of its subsets. In this case we call such $M$ the center of $\mathcal{M}$. 
    \end{enumerate}
\end{lemma}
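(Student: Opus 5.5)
The plan is to exploit the covering property hidden in the rank condition. By the definition of $\rk$, the assumption $\rk(\mathcal{M}) \geq r-s-1$ means that \emph{every} $(r-s-2)$-subset $A \subset [r]$ lies in some $N \in \mathcal{M}$ with $N \neq [r]$. Since $r > 2s+2$, we have $r-s-2 \geq s+1$. I will combine this covering property with closure under intersection to produce smaller and smaller members, and then analyse where the descent stops.

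\emph{Step 1 (descent).} Let $M \in \mathcal{M}$ with $|M| = m \geq s+2$, and let $B \subset M$ with $|B| = m-s-2$. Put $A = B \cup ([r]\setminus M)$, so $|A| = r-s-2$. Choose a proper member $N \supseteq A$. Then $N \not\supseteq M$, since otherwise $N \supseteq M \cup A = [r]$. Hence $M \cap N \in \mathcal{M}$ satisfies $B \subseteq M \cap N \subsetneq M$. So from any member of size at least $s+2$ we can pass to a strictly smaller member that still contains any prescribed subset of size $m-s-2$. Starting from a proper member containing an $(r-s-2)$-set and iterating, we reach members of size at most $s+1$.

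\emph{Step 2 (locating the center).} Assume alternative 1 fails, i.e.\ no member has size $s$. First I will show there is a member $C$ with $|C| = s+1$. To do this, run the descent so that the sizes drop slowly: choose $B$ as large as allowed, and use that intersecting with $N$ removes only elements of $M \setminus A$. Then analyse the last step from size at least $s+2$ to size at most $s+1$. The aim is to show that if no member has size $s+1$ either, one can re-choose $B$ so as to land on size exactly $s$, which is a contradiction. Next I will show that $C$ is unique. If $C \neq D$ were two $(s+1)$-members, take a member $M \supseteq C \cup D$ obtained through the covering property and descent. Then use the freedom in choosing $B$ inside $M$ to produce a member meeting $C$ in exactly $s$ points, or a member of size $s$ inside $C \cup D$, again a contradiction.

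\emph{Step 3 (all supersets and only subsets).} For each $T \supseteq C$ with $T \neq [r]$, I want $T \in \mathcal{M}$. Since $\mathcal{M}$ is closed under intersection, it suffices that for each $x \notin T$ some member contains $T$ but not $x$. If $|T| \leq r-s-2$, apply the covering property to an $(r-s-2)$-set $A \supseteq T$ with $x \notin A$. Then descend with $B$ chosen to contain $T$, which is possible as long as $|T| \leq m-s-2$, until $x$ is removed. For larger $T$, write $T$ as an intersection of two larger sets already known to be members. Finally, any member $M \not\supseteq C$ yields $M \cap C$, a member inside $C$, and one must show $M \subseteq C$. Otherwise, intersecting $M$ with suitable supersets of $C$ gives a member of size $s$, or a second $(s+1)$-member, both excluded.

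I expect the main obstacle to be Step 2, specifically the claim that the descent can be steered to land exactly on size $s$ or $s+1$ and that the $(s+1)$-member is unique. This is where the strict inequality $r > 2s+2$ must be used: it gives enough room for $A$ to contain both a chosen $(s+1)$-set and the complement of $M$. I would first try to handle it by a careful counting of how many elements an intersection with $N$ can remove, choosing $A$ to contain all but one point of $M$ outside a target set.
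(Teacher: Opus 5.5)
Step 1 is correct, but it is the only step you actually argue, and it cannot carry the rest. It guarantees only $B\subseteq M\cap N\subsetneq M$, and the cover $N$ is not under your control. Once $|M|\le 2s+1$, the prescribed $B$ has at most $s-1$ elements, so $M\cap N$ can drop below $s$ in a single step. Without $s$-element members, all the descent honestly yields is a member of size between $s+1$ and $2s+1$. The existence of an $(s+1)$-element member is the heart of the lemma, and Step 2 only announces it (``re-choose $B$ so as to land on size exactly $s$''); it does not prove it.

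Step 3 has the same defect. ``Descend with $B\supseteq T$ until $x$ is removed'' uses only covering and closure under intersection, never the absence of $s$-element members, and that cannot suffice. For $s=1$, $r=6$, consider $\{X\subseteq[6]: |X|\le 4,\ |X\cap\{1,2,3\}|\neq 2\}$. It is closed under intersection and has $\rk\ge 4=r-s-1$, since every $3$-set $A$ lies in $A$ or in $A\cup\{1,2,3\}$. It contains $C=\{1,4\}$, yet every member containing $T=\{1,2,4\}$ (here $|T|=r-s-2$) also contains $3$.

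Your treatment of larger $T$ is circular: intersections only shrink sets, so the co-singletons $[r]\setminus\{x\}$ can never come from ``larger sets already known to be members''. In its correct form (see below), the lemma is equivalent to the following: \emph{if no member has size $s$, then at least $r-s-1$ co-singletons $[r]\setminus\{x\}$ belong to $\mathcal{M}$}. The reasons are these:
at most $r-s-1$ co-singletons can be members, since intersecting $r-s$ of them gives an $s$-set;
the complement $C$ of the corresponding points then has size $s+1$;
every proper superset of $C$ is an intersection of co-singletons;
and a member $N\not\supseteq C$ with $|N|\ge s+1$ meets $C\cup Z$, for a suitable $Z\subseteq N\setminus C$, in exactly $s$ points.

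These large members must come directly from the rank condition. For a candidate center $C$, the natural witness is the cover of the $(r-s-2)$-set $[r]\setminus(C\cup\{x\})$, which equals $[r]\setminus\{x\}$ as soon as it contains $C$. Excluding covers that meet $C$ in at most $s-1$ points is exactly where the hypothesis on $s$-element members must be used, and your outline has no argument for this.

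Separately, the last claim of Step 3, that every member not containing $C$ lies inside $C$, is false for $s\ge 2$; so the phrase ``some of its subsets'' in the statement is inaccurate as written. Take $s=2$, $r=7$, $C=\{1,2,3\}$ and $\mathcal{M}=\{T: C\subseteq T\subsetneq[7]\}\cup\{\varnothing,\{4\}\}$. It is closed under intersection, and every $3$-set $A$ lies in the proper member $C\cup A$, so $\rk(\mathcal{M})=4=r-s-1$. No member has $2$ elements, and $\{4\}\not\subseteq C$.

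Your intersection trick only reaches members of size at least $s+1$. The correct conclusion is that every member of size at least $s+1$ contains $C$, while members of size at most $s-1$ are unconstrained. This weaker form is all the paper uses, since it only needs $M_i\cup\pi(Y)\in\Int(\cF_i)$.

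Finally, the member $M\supseteq C\cup D$ you invoke for uniqueness need not exist. The rank condition only covers sets of size $r-s-2$, which is smaller than $|C\cup D|$ when, for example, $C\cap D=\varnothing$ and $r<3s+4$. Uniqueness follows more easily once the superset property is known: intersect $D$ with $C\cup X$ for some $X\subseteq D\setminus C$ with $|X|=s-|C\cap D|$, which gives an $s$-element member.
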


The last two lemmas we need are due to Kupavskii and Shubin (see \cite{ShKup}).

The first one gives us the desired lower bound on the rank and allows us to use Lemma \ref{centre}.

\begin{lemma}[\cite{ShKup}, Claim 11]
    \label{rank}
    Let $\cA$ be an $r$-partite set of vertices of $G(n, r, s)$ such that $|\cA| > \binom{n}{r-s-2}$. Then $\rk(\textrm{Int}(\cA)) \geq r - s - 1$.  
\end{lemma}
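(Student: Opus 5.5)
The plan is to argue by contraposition: assuming $\rk(\Int(\cA)) \le r-s-2$, we show $|\cA| \le \binom{n}{r-s-2}$. The key observation is that a set of small size not covered by any intersection pattern makes the narrowing map injective.

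First note that $[r] \notin \Int(\cA)$. Indeed, for distinct $u,v \in \cA$, each of $u,v$ meets every part $X_i$ in exactly one element. So $\pi_{\cA}(u\cap v)=[r]$ would force $u=v$. Hence $\Int(\cA)\setminus\{[r]\}=\Int(\cA)$.

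Now suppose $\rk(\Int(\cA)) = l \le r-s-2$. Then there is $T\subset[r]$ with $|T|=l$ such that $T\not\subseteq M$ for every $M\in\Int(\cA)$. We claim the narrowing map $v\mapsto v_T$ is injective on $\cA$. Suppose $u\neq v$ and $u_T=v_T$. Then $u$ and $v$ share their unique element in $X_i$ for every $i\in T$. Hence $T\subseteq \pi_{\cA}(u\cap v)\in \Int(\cA)$, a contradiction. The degenerate case $l=0$ is included: there $T=\varnothing$ is contained in every set, so $\Int(\cA)$ must be empty, i.e. $|\cA|\le 1$.

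Injectivity gives $|\cA| \le \prod_{i\in T}|X_i|$. The image $v_T$ is a transversal of the parts $X_i$, $i\in T$. Since these parts are disjoint with total size at most $n$, AM--GM gives
$$\prod_{i\in T}|X_i|\le (n/l)^l\le \binom{n}{l}.$$
Alternatively, one can note directly that distinct transversals are distinct $l$-subsets of $[n]$, so their number is at most $\binom{n}{l}$. Finally, $\binom{n}{l}\le\binom{n}{r-s-2}$ for $l\le r-s-2$ as soon as $n\ge 2(r-s-2)$, which we may assume since $n\to\infty$. This contradicts $|\cA|>\binom{n}{r-s-2}$, so $\rk(\Int(\cA))\ge r-s-1$.

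There is no real obstacle here. The only points needing care are checking that $[r]$ never occurs in $\Int(\cA)$, so that excluding it in the definition of rank is harmless, and handling the monotonicity of $\binom{n}{l}$ in $l$ for small $n$. For small $n$ one can simply restrict to $n$ large, as everywhere in the paper.
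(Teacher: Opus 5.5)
Your argument is correct and is the standard one behind this claim of Kupavskii and Shubin; the paper gives no proof of its own and only cites it. The core step is that a set $T$ of size $l=\rk(\Int(\cA))$ contained in no intersection pattern makes $v\mapsto v_T$ injective, so $|\cA|\le\binom{n}{l}$. Your large-$n$ caveat can be dropped: all $r$ parts are nonempty, so the parts indexed by $T$ cover at most $n-r+l$ points, giving $|\cA|\le\binom{n-r+l}{l}\le\binom{n-r+m}{m}\le\binom{n}{m}$ with $m=r-s-2$ for every $n\ge r$.
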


The second one shows that each set obtained from Theorem \ref{struct} is either independent or has a lot of edges.

\begin{lemma}[\cite{ShKup}, Claim 12]
    \label{KupSh}
    Consider a set $\cA^{\prime}$ of vertices of $G(n,r,s)$. Let this set satisfy the properties from Theorem \ref{struct} with $a = 2r$. Then $\cA^{\prime}$ is either an independent set or
    $$
        e(\cA^{\prime}) \geq \frac{(r+s)|\cA^{\prime}|^2}{4r|\partial^{(s)}(\cA)|}.
    $$
\end{lemma}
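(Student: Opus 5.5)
The plan is a double-counting argument. Fix one $s$-element intersection pattern $J$, show that every vertex of $\cA^{\prime}$ has many neighbours inside $\cA^{\prime}$ through its narrowing on $J$, and then sum over vertices with Cauchy--Schwarz.

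First we find the pattern $J$. Suppose $\cA^{\prime}$ is not independent. Then there are $u \neq v$ in $\cA^{\prime}$ with $|u \cap v| = s$. Since $\cA^{\prime}$ is $r$-partite, $J := \pi_{\cA^{\prime}}(u\cap v)$ has $|J| = s$, and $J \in \Int(\cA^{\prime})$ by definition. For every $v \in \cA^{\prime}$ the narrowing $v_J$ is an $s$-element subset of $v$, because $v$ meets each part exactly once.

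Next we bound degrees from below. Fix $v \in \cA^{\prime}$ and set $C = \cA^{\prime}(v_J)$. A member $w\setminus v_J \in C$ with $w \neq v$ gives an edge $vw$ exactly when $w \cap v = v_J$, that is, when $(w\setminus v_J)\cap(v\setminus v_J) = \varnothing$.

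The members of $C$ meeting $v \setminus v_J$ lie in $\bigcup_{i \in v\setminus v_J} C(\{i\})$. By property 4 of Theorem \ref{struct} with $a = 2r$, each $C(\{i\})$ has size at most $|C|/(2r)$. Since $|v \setminus v_J| = r-s$, at most $\frac{r-s}{2r}|C|$ members of $C$ are "bad". This count also includes $v\setminus v_J$ itself, so $w = v$ is automatically excluded. Hence
$$\deg_{\cA^{\prime}}(v) \geq \frac{r+s}{2r}\,|\cA^{\prime}(v_J)|.$$
Distinct $w$ give distinct sets $w\setminus v_J$, so there is no overcounting.

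Now we sum over $v$. Let $\mathcal{T} = \{v_J \mid v \in \cA^{\prime}\}$. Each $T \in \mathcal{T}$ lies in the parts indexed by $J$, so $v_J = T$ if and only if $T \subset v$. Therefore exactly $|\cA^{\prime}(T)|$ vertices have narrowing $T$, and $\sum_{T\in\mathcal{T}} |\cA^{\prime}(T)| = |\cA^{\prime}|$. This gives
$$e(\cA^{\prime}) = \frac12\sum_{v}\deg(v) \geq \frac{r+s}{4r}\sum_{T \in \mathcal{T}} |\cA^{\prime}(T)|^2 \geq \frac{r+s}{4r}\cdot\frac{|\cA^{\prime}|^2}{|\mathcal{T}|},$$
where the last step is Cauchy--Schwarz. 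Finally, $\mathcal{T} \subseteq \partial^{(s)}(\cA^{\prime}) \subseteq \partial^{(s)}(\cA)$, so $|\mathcal{T}| \le |\partial^{(s)}(\cA)|$, which is the claimed bound.

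The main point needing care is the degree step. We must use the diversity of $C = \cA^{\prime}(v_J)$ itself rather than of $\cA^{\prime}$, and the choice $a = 2r$ is exactly what makes the bad fraction at most $\frac{r-s}{2r}$, leaving $\frac{r+s}{2r}$. The remaining properties of Theorem \ref{struct}, such as the lower bound in property 3, are not needed for this estimate.
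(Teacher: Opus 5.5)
Your proof is correct and complete. The $s$-set $J\in\Int(\cA^{\prime})$ obtained from an edge, the diversity bound with $a=2r$ giving $\deg_{\cA^{\prime}}(v)\ge\frac{r+s}{2r}|\cA^{\prime}(v_J)|$, and Cauchy--Schwarz over the narrowings $v_J\in\partial^{(s)}(\cA^{\prime})\subseteq\partial^{(s)}(\cA)$ together give exactly the stated bound.

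The paper itself gives no proof of this lemma; it imports it as Claim 12 of \cite{ShKup}. Your double count reproduces the constant $\frac{r+s}{4r}$ exactly, so it is very likely the argument of the source.
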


\section{Proof}

\begin{proof}[Proof of the easy part.]
    Consider all sets formed by a star and one vertex that does not intersect with the center of the star. We have $\binom{n}{s+1}\binom{n-s-1}{r}$ such sets. Each one of those sets is independent in $G_{p_0}(n,r,s)$ with probability
    $$
        (1-p_0)^{\binom{r}{s}\binom{n-r-s-1}{r-2s-1}} = \exp\left(-\binom{r}{s}\binom{n-r-s-1}{r-2s-1}p_0 + o(1)\right) \geq \left(\binom{n}{s+1}\binom{n-s-1}{r}\right)^{-1+\varepsilon}(1+o(1)).
    $$
    Thus the expected number of such independent sets is at least
    $$
        \left(\binom{n}{s+1}\binom{n-s-1}{r}\right)^{\varepsilon}(1+o(1))
    $$
    and tends to infinity. 

    Now it only remains to prove that 
    $$
        \mE X^2 = (1+o(1))(\mE X)^2
    $$
    with $X$ being the number of independent sets of such form. First notice that 
    $$
        \mE X^2 = \sum_{\cA_1, \cA_2}\mP(\text{both } \cA_1 \text{ and } \cA_2 \text{ are independent in } G_{p_0}(n,r,s)),
    $$
    where we sum over all pairs of sets of the mentioned form. Let us denote the probability inside this sum by $\mP(\cA_1, \cA_2)$.
    Now rewrite this sum in the following form
    $$
        \mE X^2 = \sum_{S_1, S_2: |S_i| = s+1}\sum_{u_1, u_2:|u_i \cap S_i| = 0}\mP(I(S_1, u_1), I(S_2, u_2)),
    $$
    where $I(S, v)$ is the set formed by the star with center $S$ and vertex $v$. 
    
    Let us now notice that if $u_1 \neq u_2$ the induced subgraphs $I(S_i, u_i)$ share no more than one edge. Indeed, in both of these graphs the only existing edges are edges of the form $(v, u_i)$, where $v$ belongs to the star with center $S_i$. Thus if two such edges coincide it must have both $u_1$ and $u_2$ as its endpoints and there is at most one such edge. Thus for any $u_1 \neq u_2$ and $S_1, S_2, |S_i| = s+1$ we have
    $$
        \mP(I(S_1, u_1), I(S_2, u_2)) \leq (1-p_0)^{2\binom{r}{s}\binom{n-r-s-1}{r - 2s - 1} - 1} = (1+o(1))(1-p_0)^{2\binom{r}{s}\binom{n-r-s-1}{r - 2s - 1}},
    $$
    where $o(1)$ is uniform over all possible choices of $S_i$ and $u_i$.

    Let us now consider a pair $S_1, S_2, |S_i| = s+1$ such that $|S_1 \cap S_2| =: i \neq s+1$, or equivalently, $S_1 \neq S_2$.
    We can easily bound $\mP(I(S_1, u), I(S_2, u))$ for any $u$ not intersecting $S_1 \cup S_2$.
    \begin{multline*}
        \mP(I(S_1, u), I(S_2, u)) = (1-p_0)^{2\binom{r}{s}\binom{n-r-s-1}{r - 2s - 1} - \binom{r}{s}\binom{n-r-2(s+1)+i}{r-s-2(s+1)+i}} \leq \\ \leq (1-p_0)^{2\binom{r}{s}\binom{n-r-s-1}{r - 2s - 1} - \binom{r}{s}\binom{n-r-s-2}{r-2s-2}} = \\ = (1-p_0)^{2\binom{r}{s}\binom{n-r-s-1}{r - 2s - 1}}\exp\skob{(p_0+p_0^2)\binom{r}{s}\binom{n-r-s-2}{r-2s-2}} = \\ =(1+o(1))(1-p_0)^{2\binom{r}{s}\binom{n-r-s-1}{r - 2s - 1}},
    \end{multline*}
    where $o(1)$ is uniform over all possible choices of $S_i$ and $u$.

    We can thus tell that
    \begin{multline*}
        \mE X^2 \leq \sum_{S_1, S_2: |S_i| = s+1}\sum_{u_1, u_2: |u_i \cap S_i| = 0} (1+o(1))(1-p_0)^{2\binom{r}{s}\binom{n-r-s-1}{r - 2s - 1}} + \sum_{S: |S| = s+1}\sum_{u: |u \cap S| = 0}\mP(I(S, u), I(S, u)) = \\ = (1+o(1))(\mE X)^2 + \mE X
    \end{multline*}
    and we are done since $\mE X \to \infty \Rightarrow \mE X = o((\mE X)^2)$.
\end{proof}

\begin{proof}[Proof of the hard part.]
    This proof goes along the lines of Kupavskii and Shubin paper \cite{ShKup} with slight modifications.

    In the proof we use the following notation $z := \frac{p_1n^{r-2s-1}}{\ln n}$ and $d(\mathcal{F}) := \min\limits_{S: |S| = s+1} |\mathcal{F}| - |\cF[S]|$. Note that $z = \omega(1)$. 

    Let us take some set of vertices $\mathcal{F}$ such that $|\mathcal{F}| = \binom{n-s-1}{r-s-1} + 1$. Put $k := d(\mathcal{F})$. Fix $f = \min(z^{\frac{1}{5(s+2)}},\ln n)$.

    Before we start, let us note that it suffices to prove that each such $\cF$ contains $\omega\skob{\frac{k\ln n}{p_1}} = \omega\skob{\frac{kn^{r-2s-1}}{z}}$ edges, since in this case the probability of at least one $\cF$ being independent can be estimated by a trivial union bound. Indeed,
    \begin{multline*}
        \mP\skob{\exists \cF: |\cF| = \binom{n-s-1}{r-s-1} + 1, \cF \text{ is independent in } G_{p_1}(n,r,s)} \leq \\ \leq  \sum_{k = 1}^{\binom{n-s-1}{r-s-1}}\binom{n}{s+1}\binom{\binom{n-s-1}{r-s-1}}{k-1}\binom{\binom{n}{r}}{k}(1-p_1)^{\omega\skob{\frac{k\ln n}{p_1}}} = \\ = \sum_{k = 1}^{\binom{n-s-1}{r-s-1}}\binom{n}{s+1}\binom{\binom{n-s-1}{r-s-1}}{k-1}\binom{\binom{n}{r}}{k}\exp\{-\omega\skob{k\ln n}\} \leq \\ \leq \sum_{k = 1}^{\binom{n-s-1}{r-s-1}}\exp\skob{O(k\ln n)-\omega(k\ln n)} = o(1).
    \end{multline*}

    We start by repeatedly using Theorem \ref{struct} to decompose (most of) $\mathcal{F}$ into $r$-partite subsets. That is, 
    $$
    \mathcal{F} = \mathcal{F}_1 \sqcup \mathcal{F}_2 \sqcup \dotsc \sqcup \mathcal{F}_m \sqcup \mathcal{F}_{m+1},
    $$
    where $\mathcal{F}_i$, $i \leq m$ all satisfy the properties from Theorem \ref{struct}, $|\mathcal{F}_{i}| > \frac{c(r)}{f(n, r, s, p_1)}\binom{n}{r-s-1}$ and $|\mathcal{F}_{m+1}| \leq \frac{1}{f(n, r, s, p_1)}\binom{n}{r-s-1}$. Also note that these bounds imply that $m = O(f(n,r,s,p_1))$.

    Suppose there exists at least one $\mathcal{F}_i$, $i \leq m$ with an edge inside. Then the number of edges inside $\mathcal{F}$ is at least
    $$
        \frac{(r+s)\min_{i \leq m}|\mathcal{F}_i|^2}{4r\max_{i \leq m} |\partial^{(s)}(\mathcal{F}_i)|} \geq \frac{c(r,s)\binom{n}{r-s-1}^2}{f^2(n,r,s,p_1)\binom{n}{s}} \geq \Tilde{c}(r,s)\frac{n^{2r-3s-2}}{f^2(n, r, s, p_1)} = \omega\skob{\frac{kn^{r-2s-1}}{z}}
    $$
    and we are done.

    We thus may assume that all $\mathcal{F}_i$ are independent.

    We can now notice that since $|\mathcal{F}_i| > \binom{n}{r-s-2}$, we can apply Lemma \ref{rank} to show that $\rk(\Int(\mathcal{F}_i)) \geq r-s-1$. By virtue of Lemma \ref{centre} we can say that either there is a set $A \in \Int(\mathcal{F}_i)$ such that $|A| = s$ or $\Int(\mathcal{F}_i)$ has a center. Assume the former holds. This means that there exist two vertices from $\mathcal{F}_i$ such that they share exactly $s$ elements. This contradicts the assumption that each $\mathcal{F}_i$ is independent. 
    
    We can thus assume that each of $\mathcal{F}_i$ has a center. This means that if we take the partition $X_{i,1} \sqcup \dotsc \sqcup X_{i, r}$ (recall that $\cF_i$ is $r$-partite) there exists a set $M_i \subset [r]$, $|M_i| = s+1$ such that for each $v \in \cF_i$ and each $j \in M_i$ we have $|X_{i,j} \cap v| = 1$. Let us define $\centr(v)$ as $v|_{M_i}$. Thus $\centr(v)$ is an $(s+1)$-element subset of $[n]$. 

    Now consider all $\centr(v)$ for all $v \in \cF_1 \sqcup \dotsc \sqcup \cF_m$ and enumerate them as $C_1, \dotsc, C_q$. Also define $\mathcal{G}_j$, $j \leq q$ as follows.
    $$
        \mathcal{G}_j = \{v \in \mathcal{F}_1 \sqcup \mathcal{F}_2 \sqcup \dotsc \sqcup \mathcal{F}_m \mid \centr(v) = C_j\}.
    $$

    The next thing to prove is the inequality
    \begin{equation}\label{sum_bound}
        \sum_{j = 1}^{q}|\shadow{s}{\cG_j(C_j)}| \leq m\binom{n}{s}.
    \end{equation}

    Consider the following sets $S_{i,j} = \shadow{s}{(\cG_j \cap \cF_i)(C_j)}$. If we can show that $S_{i,j_2} \cap S_{i,j_2} = \varnothing$ for $j_1 \neq j_2$, we are done, since 
    $$
        \sum_{j = 1}^{q}|\shadow{s}{\cG_i(C_i)}| \leq \sum_{j = 1}^{q}\sum_{i = 1}^{m}|\shadow{s}{(\cG_j \cap \cF_i)(C_j)}| \leq \sum_{i = 1}^{m}\binom{n}{s} = m\binom{n}{s}.
    $$

    To show that $S_{i,j_1} \cap S_{i,j_2} = \varnothing$ for $j_1 \neq j_2$ we only need to check that for any two vertices $u$ and $v$ from $\cF_i$ such that $\centr(u) \neq \centr(v)$ we have $$|(u \setminus \centr(u)) \cap (v \setminus \centr(v))| \leq |(u \cap v) \setminus (\centr(u) \cap \centr(v))| < s.$$ 
    
    Assume that $|(u \cap v) \setminus (\centr(u) \cap \centr(v))| \geq s$. Let us take a set $Y \subset (u \cap v) \setminus (\centr(u) \cap \centr(v))$  such that $|Y| = s - |\centr(u) \cap \centr(v)|$. Since $M_i \cup \pi(Y)$ is a superset of $M_i$ by Lemma \ref{centre} we have that $(M_i \cup \pi(Y)) \in \Int(\cF_i)$. Since $\Int(\cF_i)$ is closed under intersection (property 5 in Theorem \ref{struct}), we get that $Z = (M_i \cup \pi(Y)) \cap \pi(u \cap v) \in \Int(\cF_i)$. It remains to notice that $Z = \pi(Y) \sqcup \pi(\centr(u) \cap \centr(v))$ and thus $|Z| = s$. This means that there exist two vertices in $\cF_i$ that intersect in exactly $s$ elements. Thus $\cF_i$ is not independent which contradicts with our assumption. 

    Let us renumber the sets $\cG_j$ in such a way that $|\shadow{s}{\cG_1(C_1)}| \geq |\shadow{s}{\cG_2(C_2)}| \dotsc$. We want to prove that the total size of the first $h = h(n,r,s,p_1) = \lceil f^{s+1}(n,r,s,p_1)\rceil$ sets $\cG_j$ is almost the same as the total size of $\cF$. More precisely, we prove that
    \begin{equation}\label{lb_on_sum_cG}
        \sum_{j = 1}^{h}|\cG_j| = (1-o(1))\binom{n}{r-s-1}
    \end{equation}
    holds.

    Indeed, let us take $x_1 \geq \dotsc \geq x_q$ such that $|\shadow{s}{\cG_j(C_j)}| = \binom{x_j}{s}$. Note $x_j$ are not necessarily integer. From inequality \ref{sum_bound} we obtain that $|\shadow{s}{\cG_{h+1}}| \leq \frac{m}{h+1}\binom{n}{s}$. We thus have the following bound on $x_{h+1}$.
    \begin{multline*}
        \frac{(x_{h+1}-s+1)^s}{s!}\leq \binom{x_{h+1}}{s} \leq \frac{m}{h+1}\binom{n}{s} \leq \frac{mn^s}{(h+1)s!} \Rightarrow \\ \Rightarrow x_{h+1}-s+1 \leq \skob{\frac{m}{h+1}}^{1/s}n = O\skob{\frac{n}{f(n,r,s,p_1)}}.
    \end{multline*}
    
    We also have the following bound that is trivial from Theorem \ref{KrKat}.
    $$
        |\cG_j(C_j)| \leq \binom{x_j}{r-s-1}.
    $$
    Note that here we use the inequality $r - s - 1 \geq s$.

    The last observation we need is that the function 
    $$
        R(x) = \frac{\binom{x}{r-s-1}}{\binom{x}{s}} = \frac{s!}{(r-s-1)!}(x-s)\dotsc(x-r+s+2)
    $$
    is increasing for $x \geq r-s-2$ and so $R(x_j) \leq R(x_{h+1})$ for $j \geq h+1$. 

    Combining these bounds, we obtain that
    \begin{multline*}
        \sum_{j = h+1}^{q}|\cG_j| = \sum_{j = h+1}^{q}|\cG_j(C_j)| \leq \sum_{j = h+1}^{q}\binom{x_j}{r-s-1} = \sum_{j = h+1}^{q}\frac{\binom{x_j}{r-s-1}}{\binom{x_j}{s}}\binom{x_j}{s} = \\ = \sum_{j = h+1}^{q}\binom{x_j}{s}R(x_j) \leq R(x_{h+1})\sum_{j = h+1}^{q}\binom{x_j}{s} = R(x_{h+1})\sum_{j = h+1}^{q}|\shadow{s}{\cG_j(C_j)}| \leq m\binom{n}{s}R(x_{h+1}) \leq \\ \leq \frac{s!m}{(r-s-1)!}\binom{n}{s}x_{h+1}^{r-2s-1} = O\skob{\frac{n^{r-s-1}}{f^{r-2s-2}}} = o(n^{r-s-1}).
    \end{multline*}
    
    It remains to notice that $$\sum_{j = 1}^{q}|\cG_j| = |\cF_1 \sqcup \dotsc \sqcup \cF_{m}| = (1-o(1))\binom{n}{r-s-1}$$ and thus 
    $$
    \sum_{j = 1}^{h}|\cG_j| = (1-o(1))\binom{n}{r-s-1} - \sum_{j = h+1}^{q}|\cG_j| = (1-o(1))\binom{n}{r-s-1}.
    $$

    Our next step is to find subsets $\cG^{\prime}_j$ of $\cG_j$, $j \leq h$ that have some good properties and at the same time are almost of the same size as $\cG_j$.

    Define $W = \bigcup_{j = 1}^{h} C_j$, $\tau = \min(z^{1/5},\ln^{s+2} n)$. We want to obtain $\cG_1^{\prime}, \dotsc, \cG_{h}^{\prime}$ that satisfy the following properties.

    \begin{enumerate}
        \item $\cG^{\prime}_j \subseteq \cG_j$ for each $j \in [h]$.
        
        \item 
        $$
        \sum_{j = 1}^{h} |\cG_j| - \sum_{j = 1}^{h} |\cG^{\prime}_j| = o(n^{r-s-1}).
        $$

        \item For any $j \in [h]$ and $G \in \cG^{\prime}_j$, $G \cap W = C_j$.

        \item For any $j \in [h]$ and $X 
        \in \shadow{s}{\cGp_j(C_j)}$, $|\cGp_j(C_j \cup X)| \geq \frac{n^{r-2s-1}}{\tau}$.

        \item For any $j \in [h]$ and $X 
        \in [n] \setminus W$, $|X| \leq s$, the family $\cGp_j(C_j \cup X)$ is $\tau$-diverse.
    \end{enumerate}

    To do so we first fulfill the third condition by removing every $v \in \cG_j$ such that $v \cap W \neq C_j$. Since every such $v$ has at least $s+2$ vertices in $W$, we get that there are no more than 
    $$
        \binom{|W|}{s+2}\binom{n}{r-s-2} \leq |W|^{s+2}n^{r-s-2} \leq r^{s+2}h^{s+2}n^{r-s-2} = o(n^{r-s-1})
    $$
    removed vertices.

    To fulfill conditions 4 and 5 we perform the following algorithm. As long as there exists a pair $\cGp_j$ and $X \in [n]\setminus W$, $|X| \leq s$ violating condition 4 or 5, we remove all $v \in \cGp_j$ that contain $X$ as a subset.

    Note that if the pair $(\cGp_j, X)$ violates condition 4, we remove at most $\frac{n^{r-2s-1}}{\tau}$ vertices at this step. If this pair violates condition 5, we can see that there exists some $i \in [n] \setminus (C_j \cup X)$ such that $|\cGp_j(C_j \cup X \cup \{i\})| \geq \frac{1}{\tau}|\cGp_j(C_j \cup X)|$. Since $|\cGp_j(C_j \cup X \cup \{i\})| \leq \binom{n}{r-s-2-|X|}$, we get that at most
    $
        \tau\binom{n}{r-s-2-|X|}
    $
    vertices are removed at this step. As we can only perform the removal at most once for each pair $(\cGp_j, X)$, the total number of removed vertices is at most
    $$
    h\tau\sum_{x = 0}^{s}\binom{n}{x}\binom{n}{r-s-2-x} + \binom{n}{s}\frac{hn^{r-2s-1}}{\tau} \leq sh\tau n^{r-s-2} + \frac{hn^{r-s-1}}{\tau} = o(n^{r-s-1}).
    $$

    Thus after the end of this procedure the sets $\cGp_j$ satisfy all conditions 1--5.

    Let us now assume that $$\sum_{j = 1}^{h}|\shadow{s}{\cGp_j}| \geq \binom{n}{s}+h^22^s\binom{n}{s-1}+\frac{k}{n^{r-2s-1}\tau}.$$ Notice that for any $v \in \cGp_i$ and for any $X \subset [n]\setminus W$, $|X| = s - |C_i \cap C_j| =: x$ such that $X \subset v$ we have at least $|\cGp_j(X \cup C_j)| - \sum_{a \in G \setminus (X \cup C_j)} |\cGp_j(X \cup C_j \cup \{a\})|$ edges between $v$ and vertices from $\cGp_j$ such that the endpoints of these edges have common set precisely $X \cup (C_i \cap C_j)$. Using property 5 of $\cGp_j$ we can further bound the number of such edges by
    $$
    |\cGp_j(X \cup C_j)| - \sum_{a \in G \setminus (X \cup C_j)} |\cGp_j(X \cup C_j \cup \{a\})| \geq |\cGp_j(X \cup C_j)| - \frac{r}{\tau}|\cGp_j(X \cup C_j)| \geq \frac{\tau - r}{\tau}|\cGp_j(X \cup C_j)|
    $$

    Summing these inequalities over all pairs of $v \in \cGp_i$ and $X \subset [n]\setminus W$, $|X| = v$, we get that the number of edges inside $\cGp_i \cup \cGp_j$ is at least

    \begin{equation*}
        e(\cGp_i \cup \cGp_j) \geq \frac{\tau - r}{\tau}\sum_{X \subset [n]\setminus W,\\ |X| = x}|\cGp_i(C_i \cup X)||\cGp_j(C_j \cup X)|,
    \end{equation*}
    so the total number of edges inside $\mathcal{F} \supseteq (\cGp_1 \cup \dotsc \cup \cGp_h)$ is at least
    \begin{equation}
        \label{ineq1}
        e(\cF) \geq \frac{\tau - r}{\tau}\sum_{i < j}\sum_{X \subset [n]\setminus W,\\ |X| = x}|\cGp_i(C_i \cup X)||\cGp_j(C_j \cup X)|
    \end{equation}

    At the same time we have the following trivial inequality
    \begin{equation}
        \label{ineq2}
        \sum_{X \subset [n]\setminus W, |X| = x}|\cGp_i(C_i \cup X)||\cGp_j(C_j \cup X)| \geq \frac{1}{\binom{r}{s}}\sum_{X \subset [n]\setminus W, |X| = s}|\cGp_i(C_i \cup X)||\cGp_j(C_j \cup X)|.
    \end{equation}
    
    Indeed, on the left hand side we count each pair of vertices $u, v$ such that $u \in \cGp_i$, $v \in \cGp_j$ and $|u \cap v| \geq s$ at least once, while on the right hand side we count each of these pairs at most $\binom{|u \cap v|}{s} \leq \binom{r}{s}$ times. Combining inequalities \ref{ineq1} and \ref{ineq2}, we obtain
    $$
    e(\cF) \geq \frac{\tau-r}{\tau\binom{r}{s}}\sum_{i < j} \sum_{X \subset [n]\setminus W,\\ |X| = s} |\cGp_i(C_i \cup X)||\cGp_j(C_j \cup X)|.
    $$
    
    Let us now prove that under the assumption $\sum_{i}^{h}|\shadow{s}{\cGp_i}| \geq \binom{n}{s}+h^22^s\binom{n}{s-1}+\frac{k}{n^{r-2s-1}\tau}$ we have at least $\frac{k}{n^{r-2s-1}\tau}$ triples $(\cGp_i, \cGp_j, X)$, $X \subset [n] \setminus W$, $|X| = s$ such that $X \in \shadow{s}{\cGp_i} \cap \shadow{s}{\cGp_j}$. Indeed, 
    \begin{multline*}
       \sum_{i}^{h}|\shadow{s}{\cGp_i}| \leq \binom{n}{s} + \sum_{1 \leq i < j \leq h}|\shadow{s}{\cGp_i} \cap \shadow{s}{\cGp_j}| = \binom{n}{s} + \sum_{1 \leq i < j \leq h}\sum_{\substack{X \in \shadow{s}{\cGp_i} \cap \shadow{s}{\cGp_j}}}1 \leq \\ \leq \binom{n}{s} + \sum_{1 \leq i < j \leq h}\sum_{l = 1}^{|C_i \cap C_j|}\sum_{\substack{X \in \shadow{s}{\cGp_i} \cap \shadow{s}{\cGp_j},\\ |X \cap W| = l}}1 + \sum_{1 \leq i < j \leq h}\sum_{\substack{X \in \shadow{s}{\cGp_i} \cap \shadow{s}{\cGp_j},\\ |X \cap W| = l}}1 \leq \\ \leq \binom{n}{s} + h^22^s\binom{n}{s-1} + \sum_{1 \leq i < j \leq h}\sum_{\substack{X \in \shadow{s}{\cGp_i} \cap \shadow{s}{\cGp_j},\\ |X \cap W| = l}}1.
    \end{multline*}
    Using our assumption we see that 
    $$
    \sum_{1 \leq i < j \leq h}\sum_{\substack{X \in \shadow{s}{\cGp_i} \cap \shadow{s}{\cGp_j},\\ X \subset [n] \setminus W}}1 \geq \frac{k}{n^{r-2s-1}\tau},
    $$
    which is exactly what we need.

    Since there are at least $\frac{k}{n^{r-2s-1}\tau}$ triples $(\cGp_i, \cGp_j, X)$, $X \subset [n] \setminus W$, $|X| = s$ such that $X \in \shadow{s}{\cGp_i} \cap \shadow{s}{\cGp_j}$, we have that there are at least as many nonzero summands in 
    $$
        \sum_{i < j} \sum_{X \subset [n]\setminus W,\\ |X| = s} |\cGp_i(C_i \cup X)||\cGp_j(C_j \cup X)|.
    $$
    For each of these summands we know that both $|\cGp_i(C_i \cup X)|$ and $|\cGp_j(C_j \cup X)|$ are at least $\frac{n^{r-2s-1}}{\tau}$ due to property 4. This means that in this case we have
    $$
        e(\cF) \geq \frac{\tau-r}{\binom{r}{s}\tau}\frac{k}{n^{r-2s-1}\tau}\skob{\frac{n^{r-2s-1}}{\tau}}^2 = \omega\skob{\frac{kn^{r-2s-1}}{\tau^4}}.
    $$

    Using the fact that $\tau^4 = o\skob{z}$ we get that the number of edges in $\cF$ is $\omega\skob{\frac{kn^{r-2s-1}}{z}}$ and so we are done.
    
    We may thus assume that $\sum_{i}^{h}|\shadow{s}{\cGp_i}| < \binom{n}{s}+h^22^s\binom{n}{s-1}+\frac{k}{n^{r-2s-1}\tau} = (1+o(1))\binom{n}{s}$.

    We are now going to prove that there exists $i \in [h]$ such that $|\cGp_i| = (1+o(1))\binom{n}{r-s-1}$ thus proving that almost all of the vertices from $\cF$ are a part of one star.  

    Once again, let us renumber $\cGp_i$ in such a way that $|\cGp_1| \geq \dotsc \geq |\cGp_h|$. Furthermore let $y_1, \dotsc, y_h$ be real numbers such that $|\cGp_i| = \binom{y_i}{r-s-1}$. Clearly, $y_1 \geq \dotsc \geq y_h$.

    From Theorem \ref{KrKat} we have that
    \begin{equation*}
    \sum_{i = 1}^{h}\binom{y_i}{s} \leq \sum_{i = 1}^{h}|\shadow{s}{\cGp_i}| \leq (1+o(1))\binom{n}{s}.
    \end{equation*}

    Now recall that 
    $$
        \sum_{i = 1}^{h}|\cGp_i| \geq \sum_{i = 1}^{h}|\cG_i| + o(n^{r-s-1}) \geq (1-o(1))\binom{n}{r-s-1},
    $$
    where the last inequality follows trivially from inequality \ref{lb_on_sum_cG}. On the other end, we have
    $$
    \sum_{i = 1}^{h}|\cGp_i| = \sum_{i = 1}^{h}\binom{y_i}{r-s-1} = \sum_{i =1}^{h}R(y_i)\binom{y_i}{s} \leq R(y_1)\sum_{i = 1}^{h}\binom{y_i}{s} \leq (1+o(1))\binom{n}{s}R(y_1).
    $$
    Combining these two bounds we obtain
    $$
        R(y_1) = \frac{\binom{y_1}{r-s-1}}{\binom{y_1}{s}} \geq (1-o(1))\frac{\binom{n}{r-s-1}}{\binom{n}{s}},
    $$
    from which $y_1 = (1-o(1))n$ follows trivially.

    Let us now return to the initial set $\cF$. We know that the intersection of $\cF$ with one of the stars (without loss of generality we assume that this star is $\fig{v \in G(n,r,s)\mid [s+1] \subset v}$) is $(1-o(1))\binom{n}{r-s-1}$. Let us denote the set of vertices from this intersection by $\cA$ and the set of other vertices from the star by $\cAp$. Furthermore let $\cB := \cF \setminus \cA$. We know that $|\cB| = k$ and $|\cA| = \binom{n-s-1}{r-s-1}-k+1$.
    
    Let $\cBp = \{v \in \cB \mid \deg_{\cA}(v) \leq \frac{1}{2}\binom{n-r-s-1}{r-2s-1}\}$. Assume that $|\cBp| \leq \frac12|\cB| = \frac12k$. Then $\cF$ contains at least $\frac{k\binom{n-r-s-1}{r-2s-1}}{4} = \omega\skob{\frac{kn^{r-2s-1}}{z}}$ edges and we are done. 
    
    It remains to consider the case where $|\cBp| > \frac{1}{2}k$. We start by considering all possible $Q \subset [s+1]$ and taking $Q_0$ such that the number of sets from $\cBp$ that intersect $[s+1]$ by exactly $Q_0$ is maximal. Let $\cBp_Q = \cBp[Q_0]$. We clearly have $|\cBp_Q| \geq \frac{1}{2^{s+1}}|\cBp| \geq \frac1{2^{s+2}}k$. 
    
    Let us now prove the inequality

    \begin{equation}
        \label{lst}
        k = |\cB| \geq \frac{|\shadow{s-|Q_0|}{\cBp_{Q_0}(Q_0)}|\binom{n-r-s-1+|Q_0|}{r-2s-1+|Q_0|}}{2\binom{r-s-1}{s-|Q_0|}}.
    \end{equation}

    Let us fix a pair $(v, S)$ such that $S \subset v$, $|S| = s-|Q_0|$, $v \in \cBp_{Q_0}$. We have at least $\binom{n-r-s-1+|Q_0|}{r-2s-1+|Q_0|}$ vertices $u \in \cA \sqcup \cAp$ such that they intersect $v$ by exactly $S \cup Q_0$. Since the number of vertices between $v$ and $\cA$ is less than 
    $$\frac{1}{2}\binom{n-r-s-1}{r-2s-1} \leq \frac{1}{2}\binom{n-r-s-1+|Q_0|}{r-2s-1+|Q_0|},$$
    at least half of these $u$ lie in $\cAp$. Summing over all $S \in \shadow{s-|Q_0|}{\cBp_{Q}(Q_0)}$, we obtain
    $$
    \frac{1}{2}\binom{n-r-s-1+|Q_0|}{r-2s-1+|Q_0|}|\shadow{s-|Q_0|}{\cBp}| \leq \sum_{S \in \shadow{s-|Q_0|}{\cBp_Q(Q_0)}}|\cAp(S)| \leq \binom{r-s-1}{s-|Q_0|}|\cAp| < \binom{r-s-1}{s-|Q_0|}|\cB|
    $$
    Now inequality \ref{lst} follows  immediately.

    Now apply Theorem \ref{struct} to $\cBp_Q(Q_0)$ with $a = 2(r-|Q_0|)$ (note that we are now considering vertices of $G(n-|Q_0|,r-|Q_0|,s-|Q_0|)$ graph). We obtain an $(r-|Q_0|)$-partite set $\cBpp \subset \cBp_Q(Q_0)$ such that $$|\cBpp| \geq c(r-|Q_0|, 2(r-|Q_0|))|\cBp_Q(Q_0)| \geq \frac{c(r,2r)}{2^{s+2}}k.$$

    Consider two options. The first option is that there is at least one edge inside $\cBpp$. Note that each edge inside $\cBpp$ corresponds to an edge inside $\cBp_Q$, as we add the same set of points to each of the vertices. We can then use Lemma \ref{KupSh} and inequality \ref{lst} to obtain that 
    $$
        e(\cBpp) \geq \frac{c|\cBpp|^2}{|\shadow{s-|Q_0|}{\cBpp}|} \geq \frac{c|\cBpp|^2}{|\shadow{s-|Q_0|}{\cBp}|} \geq \frac{c\binom{n-r-s-1}{r-2s-1+|Q_0|}|\cBpp|^2}{2\binom{r-s-1}{s-|Q_0|}|\cB|} \geq c^{\prime}n^{r-2s-1+|Q_0|}k = \omega\skob{\frac{kn^{r-2s-1}}{z}}.
    $$
    It remains to consider the case where $\cBpp$ is independent. In this case we may apply Theorem \ref{shadow_for_ind} and get that 
    \begin{equation}
        \label{1}
        k = |\cB| \leq C_0(r)|\cBpp| \leq C_1(r)|\shadow{r-s-1}{\cBpp}|
    \end{equation}
    for some $C_1(r) > 0$. 
    
    Let $x$ be a real number such that $|\shadow{r-s-1}{\cBpp}| = \binom{x}{r-s-1}$. Since $$|\shadow{r-s-1}{\cBpp}| \leq \binom{r}{r-s-1}|\cBpp| = o(n^{r-s-1}),$$ we obtain that $x = o(n)$. At the same time 
    \begin{equation}
        \label{2}
        |\shadow{s-|Q_0|}{\cBp_Q(Q_0)}| \geq |\shadow{s-|Q_0|}{\cBpp}| = |\shadow{s-|Q_0|}{\shadow{r-s-1}{\cBpp}}| \geq \binom{x}{s-|Q_0|}.
    \end{equation}
    Combining inequalities \ref{1}, \ref{2} and \ref{lst}, we get
    $$
    C_1(r)\binom{x}{r-s-1} \geq k \geq \frac{|\shadow{s-|Q_0|}{\cBp_Q(Q_0)}|\binom{n-r-s-1+|Q_0|}{r-2s-1+|Q_0|}}{2\binom{r-s-1}{s-|Q_0|}} \geq \frac{\binom{x}{s-|Q_0|}\binom{n-r-s-1+|Q_0|}{r-2s-1+|Q_0|}}{2\binom{r-s-1}{s-|Q_0|}}
    $$
    and thus we have
    $$
        x^{r-s-1} \geq C_2x^{s-|Q_0|}n^{r-2s-1+|Q_0|},
    $$
    which contradicts with the fact that $x = o(n)$.
\end{proof}

\section*{Acknowledgements}

We would like to thank Y. K. Shubin who proofread the text and made a lot of useful remarks.

\printbibliography

\end{document}